\numberwithin{equation}{section}
\newtheorem{theorem}{Theorem}[section]
\newtheorem{lemma}[theorem]{Lemma}
\newtheorem{proposition}[theorem]{Proposition}
\newtheorem{corollary}[theorem]{Corollary}
\theoremstyle{definition}
\newtheorem{definition}[theorem]{Definition} 
\newtheorem{remark}[theorem]{Remark}
\newtheorem{example}[theorem]{Example}
\begin{document}


\title[]{Complete intersections in binomial and lattice ideals}

\thanks{The first author was partially supported by CONACyT. 
The second author was partially supported by SNI}

\author{Hiram H. L\'opez}
\address{
Departamento de
Matem\'aticas\\
Centro de Investigaci\'on y de Estudios
Avanzados del
IPN\\
Apartado Postal
14--740 \\
07000 Mexico City, D.F.
}

\author{Rafael H. Villarreal}
\address{
Departamento de
Matem\'aticas\\
Centro de Investigaci\'on y de Estudios
Avanzados del
IPN\\
Apartado Postal
14--740 \\
07000 Mexico City, D.F.
}
\email{vila@math.cinvestav.mx}

\keywords{Complete intersections, lattice ideals, binomial ideals,
evaluation codes, monomial curves, toric ideals, vanishing ideals}
\subjclass[2010]{Primary 13F20; Secondary 14H45, 13P25, 11T71.} 

\begin{abstract} For the family of graded lattice ideals 
of dimension $1$, we establish a complete intersection criterion in 
algebraic and geometric
terms. In positive characteristic, it is shown that all ideals of
this family are 
binomial set theoretic complete intersections. In characteristic 
zero, we show that an arbitrary lattice ideal which is a binomial set
theoretic complete intersection is a complete intersection. 
\end{abstract}

\maketitle 

\section{Introduction}\label{intro-ci-vanishing}

This work was motivated in large part by the interest in two
classes of  $1$-dimensional graded lattice ideals, namely, the
family of vanishing ideals of projective algebraic toric sets over
finite fields and the family of toric ideals of monomial curves over
arbitrary fields. Our main results will apply to these families.     

Let $S=K[t_1,\ldots,t_s]$ be a positively
graded polynomial ring over a field $K$. As usual, by the {\it
dimension\/} of an ideal $L\subset S$ we mean the Krull dimension of
the quotient ring $S/L$. In this paper we study the
family of graded lattice ideals 
$L\subset S$ satisfying
that $V(L,t_i)=\{0\}$ for $i=1,\ldots,s$, where $V(L,t_i)$ is the zero-set
of the ideal $(L,t_i)$. An ideal is in this family if and only if it
is a $1$-dimensional graded lattice ideal
(Proposition~\ref{apr24-12}). 
One of our main results is a classification of
the complete 
intersection property
for this family of ideals (Theorem~\ref{ci-lattice}). Using a result 
of \cite{katsabekis-morales-thoma}, we show 
that in positive characteristic all ideals in this family are
binomial set theoretic complete intersections
(Proposition~\ref{stcib-1-dim-lattice}).  

As an application, we recover a result of \cite{stcib} that characterizes
complete intersection toric ideals of monomial curves over arbitrary fields
(Corollary~\ref{LaConcepcion-dec24-2011}). This result was used in
\cite{stcib} to give a classification of 
the complete intersection property using the notion of a binary
tree. This classification was adapted in \cite{stcib-algorithm} to
give an 
effective algorithm that
checks the complete intersection property. Toric ideals of monomial
space curves were 
first studied 
by Herzog \cite{He3}. They have been studied by many
authors \cite{ARZ,stcib-algorithm,stcib,delorme,Eli1,ElVi,moh,thoma}.

If ${\rm char}(K)=0$, using commutative algebra, 
we show that an arbitrary lattice ideal which is a binomial set theoretic
complete intersection is a complete intersection (Theorem~\ref{adelantado1}).
 This result complements \cite[Corollary~3.10]{morales-thoma}, where the lattice
ideal is taken with respect to 
a partial character  but the lattice ideal is assumed to be 
positive. Another result in this area shows that any primary binomial ideal
over a field of characteristic zero is radical
\cite[Proposition~2.3]{stcib}. This type of results were inspired by 
\cite[Theorem~4]{BMT}, where it was shown that if a toric ideal is a 
binomial set theoretic complete intersection and
${\rm char}(K)=0$, then it is a complete intersection. 

Complete intersection lattice ideals have been characterized in
\cite{FMS,katsabekis-morales-thoma,morales-thoma}, 
in terms of semigroup
gluing, mixed dominating matrices and polyhedral geometry.  Since our
methods of proof are 
entirely different to those
of \cite{FMS,katsabekis-morales-thoma,morales-thoma}, we hope that
our approach can be used to 
examine some other problems in the area from another perspective. 

The rest of this paper is about applications of our results
to vanishing ideals over finite
fields. Vanishing ideals are connected to coding theory as is seen
below.

Let $\mathbb{F}_q$  be a finite field with $q$ elements and 
let $v_1,\ldots,v_s$ be a sequence of vectors in
$\mathbb{N}^n$. Consider the {\it projective algebraic toric
set\/} 
$$
X:=\{[(x_1^{v_{11}}\cdots x_n^{v_{1n}},\ldots,x_1^{v_{s1}}\cdots
x_n^{v_{sn}})]\, \vert\, x_i\in \mathbb{F}_q^*\mbox{ for all
}i\}\subset\mathbb{P}^{s-1}
$$
parameterized by monomials, where $v_i=(v_{i1},\ldots,v_{in})$, 
$\mathbb{F}_q^*=\mathbb{F}_q\setminus\{0\}$ and 
$\mathbb{P}^{s-1}$ is a projective space over the field $\mathbb{F}_q$. The set
$X$ is a multiplicative group under componentwise multiplication. 

Let $S=\mathbb{F}_q[t_1,\ldots,t_s]=\oplus_{d=0}^\infty S_d$ be a
polynomial ring over the field $\mathbb{F}_q$ with the standard
grading. The {\it vanishing ideal\/} of $X$, denoted by
$I(X)$, is the ideal 
of $S$ generated by the homogeneous polynomials that vanish on $X$. 
It is known that $I(X)$ is a radical $1$-dimensional
Cohen-Macaulay lattice ideal
\cite{geramita-cayley-bacharach,algcodes}. For this class of ideals
not much is known about the complete intersection property. The
first result in this direction appears recently in \cite{ci-codes}, where it is shown
that if $X$ is parameterized by the edges of a simple hypergraph, 
then $I(X)$ is a complete intersection if and only if $X$ is a
projective torus.

The complete intersection property of $I(X)$ is relevant from the
viewpoint of algebraic coding theory as we now briefly explain. 
Roughly speaking, an {\it
evaluation code\/} over $X$ of degree $d$ is a linear space obtained
by evaluating all homogeneous $d$-forms of $S$ on
the set of points $X\subset{\mathbb P}^{s-1}$ (see
\cite{duursma-renteria-tapia,gold-little-schenck}). 
An evaluation code over
$X$ of degree $d$ has {\it length\/} $|X|$ and {\it dimension} 
$\dim_K (S/I(X))_d$. The main parameters (length, dimension, minimum
distance) of evaluation codes arising from complete 
intersections have been studied in
\cite{ballico-fontanari,duursma-renteria-tapia,gold-little-schenck,
hansen,ci-codes}. 

As a consequence of our results, in Section~\ref{vanishing-id-section}, 
we characterize the complete intersection property
of $I(X)$ in algebraic and geometric terms
(Corollary~\ref{ci-char-lattice}) and show that $I(X)$ is a binomial
set theoretic complete 
intersection (Corollary~\ref{stcib-i(x)}). 

For all unexplained
terminology and additional information,  we refer to 
\cite{EisStu,cca,monalg} (for the theory of binomial and lattice
ideals) and \cite{BHer} (for commutative algebra). 

\section{Complete intersections}\label{ci-lattice-section}

We continue to use the notation and definitions used in
Section~\ref{intro-ci-vanishing}. In this section we characterize the
complete intersection property of the family of all graded lattice 
ideals of dimension $1$. If ${\rm char}(K)>0$, we show that all
ideals of this family are  
binomial set theoretic complete intersections. In characteristic 
zero, we show that an arbitrary lattice ideal which is a binomial set
theoretic complete intersection is a complete intersection. 

Recall that a binomial in $S$ is a polynomial of the
form $t^a-t^b$, where 
$a,b\in \mathbb{N}^s$ and where, if
\mbox{$a=(a_1,\dots,a_s)\in\mathbb{N}^s$}, we set 
\[
t^a=t_1^{a_1}\cdots t_s^{a_s}\in S. 
\]
A binomial of the form $t^a-t^b$ is usually referred 
to as a {\it pure binomial\/} \cite{EisStu}, although here we are dropping the
adjective ``pure''.  A {\it binomial ideal\/} is an ideal generated by binomials. 

Given $c=(c_i)\in {\mathbb Z}^s$, we set ${\rm supp}(c)=\{i\, |\,
c_i\neq 0\}$. The set ${\rm supp}(c)$ is called the {\it support\/} of
$c$. The vector $c$ can be uniquely written as $c=c^+-c^-$, 
where $c^+$ and $c^-$ are two nonnegative vectors 
with disjoint support, the {\it positive\/} and 
the {\it negative\/} part of $c$ respectively. If $t^a$ is a monomial,
with $a=(a_i)\in\mathbb{N}^s$, we set ${\rm supp}(t^a)=\{t_i\vert\,
a_i>0\}$. If $f=t^a-t^b$ is a binomial, we set ${\rm supp}(f)={\rm
supp}(t^a)\cup{\rm supp}(t^b)$.

\begin{definition}\label{lattice-ideal-def}\rm 
Let $\mathcal{L}\subset \mathbb{Z}^s$ be a {\it lattice\/}, that is, 
$\mathcal{L}$ is a subgroup of $\mathbb{Z}^s$. The {\it lattice
ideal\/} of $\mathcal{L}$ is the binomial ideal
$$
I(\mathcal{L})=(\{t^{a^+}-t^{a^-}\vert\, 
a\in\mathcal{L}\})\subset S.
$$
\end{definition}

This concept is a natural generalization of a toric ideal
\cite[Corollary~7.1.4]{monalg}. Lattice ideals have been studied
extensively, see  \cite{EisStu,eto,cca} and the references there. 

The following is a well known description of lattice
ideals that follows from \cite[Corollary~2.5]{EisStu}.

\begin{theorem}{\rm\cite{EisStu}}\label{jun12-02} If $L$ is a binomial ideal 
of $S$, then $L$ is a lattice ideal if and only if $t_i$ is a non-zero
divisor of $S/L$ 
for all $i$. 
\end{theorem}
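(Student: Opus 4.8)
The plan is to prove the two implications separately, using throughout the combinatorics of pure binomials $t^{a}-t^{b}$ (coefficients $\pm1$). For the implication ``$L$ a lattice ideal $\Rightarrow$ every $t_{i}$ is a non-zero divisor of $S/L$'', write $L=I(\mathcal{L})$ and first check that $I(\mathcal{L})$ is spanned over $K$ by the binomials $t^{u}-t^{v}$ with $u,v\in\mathbb{N}^{s}$ and $u-v\in\mathcal{L}$, since a monomial times a generator $t^{a^{+}}-t^{a^{-}}$ is again of this form. Hence, choosing a set $R\subset\mathbb{N}^{s}$ of representatives for the equivalence $u\sim v\Leftrightarrow u-v\in\mathcal{L}$, the residues of the monomials $t^{w}$, $w\in R$, form a $K$-basis of $S/L$; multiplication by $t_{i}$ sends $t^{w}$ to $t^{w+e_{i}}$, whose residue equals that of the representative in $R$ of $w+e_{i}$, and since $w+e_{i}\sim w'+e_{i}$ forces $w\sim w'$, this operation carries the basis injectively into itself, so $t_{i}$ is a non-zero divisor of $S/L$. (Alternatively, the same spanning fact identifies $S/L$ with a subring of the group algebra $K[\mathbb{Z}^{s}/\mathcal{L}]$ in which each $t_{i}$ becomes a unit.)

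For the converse, assume $L$ is a binomial ideal with every $t_{i}$ a non-zero divisor of $S/L$, and set $\mathcal{L}:=\{a\in\mathbb{Z}^{s}\mid t^{a^{+}}-t^{a^{-}}\in L\}$. The step I expect to be the main obstacle is showing that $\mathcal{L}$ is a sublattice of $\mathbb{Z}^{s}$: closure under $a\mapsto-a$ is immediate, but closure under addition is where the hypothesis is really needed. Given $a,b\in\mathcal{L}$, multiply the congruences $t^{a^{+}}\equiv t^{a^{-}}$ and $t^{b^{+}}\equiv t^{b^{-}}$ modulo $L$ to get $t^{a^{+}+b^{+}}\equiv t^{a^{-}+b^{-}}$. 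Writing $c=a+b$ and $m=\min(a^{+}+b^{+},\,a^{-}+b^{-})$ coordinatewise, an elementary check gives $a^{+}+b^{+}=m+c^{+}$ and $a^{-}+b^{-}=m+c^{-}$, so $t^{m}\bigl(t^{c^{+}}-t^{c^{-}}\bigr)\in L$; since $t^{m}$ is a product of the $t_{i}$ it is a non-zero divisor of $S/L$, whence $t^{c^{+}}-t^{c^{-}}\in L$, i.e.\ $a+b\in\mathcal{L}$. This is also the point where purity of the binomials is essential: for arbitrary binomials this cancellation fails and one is led instead to the partial characters of \cite{EisStu}.

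It remains to verify $L=I(\mathcal{L})$. The inclusion $I(\mathcal{L})\subseteq L$ holds by definition of $\mathcal{L}$. Conversely, let $f_{1},\dots,f_{r}$ be pure binomial generators of $L$; writing $f_{j}=t^{u_{j}}-t^{v_{j}}$ and $w_{j}=\min(u_{j},v_{j})$, the identity above gives $f_{j}=t^{w_{j}}\bigl(t^{(u_{j}-v_{j})^{+}}-t^{(u_{j}-v_{j})^{-}}\bigr)$, and since $f_{j}\in L$ while $t^{w_{j}}$ is a non-zero divisor of $S/L$, we obtain $t^{(u_{j}-v_{j})^{+}}-t^{(u_{j}-v_{j})^{-}}\in L$, hence $u_{j}-v_{j}\in\mathcal{L}$ and $f_{j}\in I(\mathcal{L})$. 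Thus $L=I(\mathcal{L})$, which proves the statement. (The converse direction is just the specialization to pure binomials of \cite[Corollary~2.5]{EisStu}, which one could of course invoke directly.)
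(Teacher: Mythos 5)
Your proof is correct, but it takes a genuinely different route from the paper, which in fact offers no proof at all: Theorem~\ref{jun12-02} is quoted from Eisenbud--Sturmfels and justified by citing \cite[Corollary~2.5]{EisStu}, a statement about general binomial ideals phrased via partial characters and the saturation $(I:(t_1\cdots t_s)^\infty)$. What you supply is a self-contained elementary argument tailored to pure binomials $t^a-t^b$: for the forward direction you use that $I(\mathcal{L})$ is $K$-spanned by the binomials $t^u-t^v$ with $u-v\in\mathcal{L}$, so the classes of $\mathbb{N}^s$ modulo $\mathcal{L}$ index a monomial $K$-basis of $S/L$ on which multiplication by $t_i$ permutes basis elements injectively (equivalently, $S/L$ embeds in $K[\mathbb{Z}^s/\mathcal{L}]$, where each $t_i$ becomes a unit); for the converse you set $\mathcal{L}=\{a\in\mathbb{Z}^s \mid t^{a^+}-t^{a^-}\in L\}$ and use the identity $t^{a^++b^+}-t^{a^-+b^-}=t^{m}\bigl(t^{c^+}-t^{c^-}\bigr)$ with $c=a+b$, $m=\min(a^++b^+,a^-+b^-)$, plus the hypothesis that monomials are non-zero divisors, to get closure under addition and then $L=I(\mathcal{L})$ by the same factor-out-the-gcd trick applied to the pure binomial generators. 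Your approach buys a short proof free of the partial-character machinery, which is indeed dispensable here precisely because the coefficients are $\pm1$; the citation buys the more general statement for arbitrary binomial ideals (where characters and field issues genuinely enter), of which, as you note, this theorem is the pure specialization. One small point deserving an explicit sentence: the linear independence of the residues of $\{t^w\}_{w\in R}$ is not automatic from spanning; it follows because $I(\mathcal{L})$ lies in the kernel of the $K$-linear map $S\to K[\mathbb{Z}^s/\mathcal{L}]$ sending $t^u$ to the class of $u$, so no nontrivial $K$-combination of monomials from distinct classes can lie in $L$.
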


\begin{lemma}\label{jun13-02} If $a,\alpha_i,b,\beta_i$ are in 
$\mathbb{N}^s$ for $i=1,\ldots,r$ and $a-b$ is in the subgroup 
of $\mathbb{Z}^s$ generated by $\alpha_1-\beta_1,\ldots,\alpha_r-\beta_r$, 
then there is $t^\delta\in S$ such that
$$
t^\delta(t^a-t^b)\in (t^{\alpha_1}-t^{\beta_1},\ldots,t^{\alpha_r}-t^{\beta_r}).
$$
\end{lemma}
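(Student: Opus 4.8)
The plan is to work in the Laurent polynomial ring $\bar{S}=S[t_1^{-1},\dots,t_s^{-1}]$, the localization of $S$ at the powers of $t_1\cdots t_s$, in which every Laurent monomial $t^c$ with $c\in\mathbb{Z}^s$ is a unit. Set $I=(t^{\alpha_1}-t^{\beta_1},\dots,t^{\alpha_r}-t^{\beta_r})\subset S$ and $c_i=\alpha_i-\beta_i$. The first, elementary observation is that $t^{c_i}-1=t^{-\beta_i}(t^{\alpha_i}-t^{\beta_i})\in I\bar{S}$ and, symmetrically, $t^{-c_i}-1=-t^{-\alpha_i}(t^{\alpha_i}-t^{\beta_i})\in I\bar{S}$, so $t^{\pm c_i}-1\in I\bar{S}$.

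Next, using the hypothesis, write $a-b=\sum_{i=1}^r n_ic_i$ with $n_i\in\mathbb{Z}$, and prove that $t^{a-b}-1\in I\bar{S}$ by induction on $\sum_i|n_i|$. The case $\sum_i|n_i|=0$ is trivial since then $a=b$. For the inductive step choose $j$ with $n_j\neq 0$, put $\varepsilon=n_j/|n_j|\in\{\pm1\}$ and $c'=(a-b)-\varepsilon c_j$; then $c'$ is an integer combination of the $c_i$ whose coefficient vector has strictly smaller $\ell^1$-norm, so $t^{c'}-1\in I\bar{S}$ by induction. The telescoping identity $t^{a-b}-1=t^{c'}(t^{\varepsilon c_j}-1)+(t^{c'}-1)$, together with the first observation applied to $\varepsilon c_j$, then gives $t^{a-b}-1\in I\bar{S}$.

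To conclude, multiply by the monomial $t^b\in S\subseteq\bar{S}$: since $t^a-t^b=t^b(t^{a-b}-1)$, we get $t^a-t^b\in I\bar{S}$, say $t^a-t^b=\sum_{i=1}^r h_i(t^{\alpha_i}-t^{\beta_i})$ with $h_i\in\bar{S}$. Choosing $N$ so large that $(t_1\cdots t_s)^N h_i\in S$ for every $i$ and setting $\delta=(N,\dots,N)$, we obtain $t^\delta(t^a-t^b)=\sum_{i=1}^r\bigl((t_1\cdots t_s)^N h_i\bigr)(t^{\alpha_i}-t^{\beta_i})\in I$, which is the required conclusion. I do not expect a genuine obstacle here: the only points that need care are the sign bookkeeping in the induction on the $\ell^1$-norm of the coefficient vector, and the final clearing of denominators, which works because $\bar{S}$ is a localization of $S$ at a single element, so a common monomial denominator for the finitely many $h_i$ exists. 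One could instead avoid localization altogether and run the same telescoping inside $S$, at the cost of tracking the positive and negative parts of the intermediate exponent vectors; the localized version seems cleaner.
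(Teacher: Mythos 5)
Your proof is correct, and it is essentially the paper's argument: the paper also passes (implicitly) to Laurent monomials, writes $(t^a/t^b)-1$ in terms of the $(t^{\alpha_i}/t^{\beta_i})-1$ using the group relation, and clears denominators with a monomial $t^\delta$. The only difference is cosmetic: where you run an induction on the $\ell^1$-norm with a telescoping identity, the paper replaces each $t^{\alpha_i}/t^{\beta_i}$ with its inverse as needed to make the exponents nonnegative and then expands via the binomial theorem.
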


\begin{proof} We set $f=t^a-t^b$ and $g_i=t^{\alpha_i}-t^{\beta_i}$. There are 
integers $\lambda_1,\ldots,\lambda_r$ such that 
\[
(t^{a}/t^{b})-1=
\left(t^{\alpha_1}/t^{\beta_1}\right)^{\lambda_1}\cdots 
\left(t^{\alpha_r}/t^{\beta_r}\right)^{\lambda_r}-1.
\]
We may assume that $\lambda_i\geq 0$ for all $i$ by replacing, if necessary,
$t^{\alpha_i}/t^{\beta_i}$ by its inverse.  
Hence, writing 
$t^{\alpha_i}/t^{\beta_i}=((t^{\alpha_i}/t^{\beta_i})-1)+1$ 
and using the binomial theorem, it follows that 
$t^{\delta}f$ is in the ideal $(g_1,\ldots,g_r)$ for some monomial
$t^{\delta}$. 
\end{proof}

Let $\mathcal{G}$ be a subgroup of ${\mathbb Z}^s$. Following \cite{Eli}, we define
an equivalence relation $\sim_\mathcal{G}$ on the set of monomials of $S$ by
$t^{a}\sim_\mathcal{G} t^{b}$ if and only if $a-b\in \mathcal{G}$. 
A non-zero polynomial $f=\sum_{a}\lambda_a t^a$
is {\it simple\/} with respect to $\sim_\mathcal{G}$ if all its monomials
with non-zero coefficient are equivalent under $\sim_\mathcal{G}$.
An arbitrary non-zero polynomial $f$ in $S$ is uniquely expressed as:
$f=f_1+\cdots+f_m$ such that $f_i$ is simple and if $i\neq j$ and
$t^{a},t^b$ are monomials in $f_i$ and $f_j$ respectively,
then $t^a{\not\sim}_\mathcal{G}t^b$. We call $f_1,\ldots,f_m$ the 
{\it simple components\/} of $f$ with respect to $\sim_\mathcal{G}$.

Given a binomial $g=t^a-t^b$, we set $\widehat{g}=a-b$. If $B$ is a
subset of $\mathbb{Z}^s$, $\langle B\rangle$ denotes the subgroup of 
$\mathbb{Z}^s$ generated by $B$.

For convenience we recall the following result about the
behaviour of simple components.

\begin{lemma}{\cite[Lemma~2.2]{ElVi}}\label{sept-21-04} Let 
$I$ be a binomial ideal of $S$ generated by a set of binomials 
$g_1,\ldots,g_r$. If $0\neq f\in I$ and
$\mathcal{G}=\langle\widehat{g}_1,\ldots,\widehat{g}_r\rangle$, then 
any simple component of $f$ with respect to $\sim_\mathcal{G}$ belongs to $I$.
\end{lemma}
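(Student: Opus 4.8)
The plan is to realize $f$ as an $S$-linear combination of the given binomial generators, split that combination into individual ``monomial times generator'' terms, observe that each such term is simple with respect to $\sim_\mathcal{G}$, and then regroup the terms by their $\sim_\mathcal{G}$-equivalence class and invoke the uniqueness of the simple-component decomposition.

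First I would use $f\in I=(g_1,\ldots,g_r)$ to write $f=\sum_{i=1}^r h_ig_i$ with $h_i\in S$, and expand each $h_i=\sum_c\lambda_{i,c}t^c$, so that $f=\sum_{i,c}\lambda_{i,c}t^cg_i$. Writing $g_i=t^{\alpha_i}-t^{\beta_i}$, we have $t^cg_i=t^{c+\alpha_i}-t^{c+\beta_i}$, and since $(c+\alpha_i)-(c+\beta_i)=\widehat{g}_i\in\mathcal{G}$, the two monomials occurring in $t^cg_i$ are $\sim_\mathcal{G}$-equivalent; hence every term $\lambda_{i,c}t^cg_i$ is simple with respect to $\sim_\mathcal{G}$ and clearly lies in $I$. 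Next, for each $\sim_\mathcal{G}$-class $\kappa$ let $\kappa(i,c)$ denote the class of $t^{c+\alpha_i}$ (equivalently of $t^{c+\beta_i}$) and set $F_\kappa=\sum_{(i,c):\,\kappa(i,c)=\kappa}\lambda_{i,c}t^cg_i$. Each nonzero $F_\kappa$ is a sum of elements of $I$, hence lies in $I$, and is simple with respect to $\sim_\mathcal{G}$ because all of its monomials lie in the single class $\kappa$; moreover $f=\sum_\kappa F_\kappa$ is an expression of $f$ as a sum of simple polynomials whose nonzero summands involve monomials from pairwise distinct classes. By the uniqueness of the decomposition of a nonzero polynomial into its simple components with respect to $\sim_\mathcal{G}$, the nonzero $F_\kappa$ are exactly the simple components $f_1,\ldots,f_m$ of $f$. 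Therefore each $f_j$ belongs to $I$.

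I do not anticipate a genuine obstacle; the only point requiring a little care is the bookkeeping that identifies $\sum_\kappa F_\kappa$ with the uniquely determined simple-component decomposition of $f$. This reduces to two immediate observations: distinct nonzero $F_\kappa$ involve monomials in distinct classes, and a given monomial of $S$ lies in exactly one $\sim_\mathcal{G}$-class, so there is no cancellation \emph{between} classes when the $\lambda_{i,c}t^cg_i$ are collected. Once this is noted, the conclusion is forced by the uniqueness statement recorded before the lemma.
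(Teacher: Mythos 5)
Your argument is correct: writing $f=\sum_{i,c}\lambda_{i,c}t^cg_i$, noting each term is simple because $\widehat{g}_i\in\mathcal{G}$, and regrouping by $\sim_\mathcal{G}$-class with no cross-class cancellation does identify the nonzero $F_\kappa$ with the simple components of $f$, all of which lie in $I$. The paper itself gives no proof (it cites \cite[Lemma~2.2]{ElVi}), and your argument is essentially the standard one from that reference, so nothing further is needed.
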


\begin{lemma}\label{dec21-11} Let $\mathcal{L}\subset\mathbb{Z}^s$ be
a lattice and let $I(\mathcal{L})$ be its lattice ideal. If
$g_1,\ldots,g_r$ is a set of binomials that generate
$I(\mathcal{L})$, then
$\mathcal{L}=\langle\widehat{g}_1,\ldots,\widehat{g}_r\rangle$. In
particular if $L$ is a lattice ideal, there is a unique lattice
$\mathcal{L}$ such that $L=I(\mathcal{L})$.
\end{lemma}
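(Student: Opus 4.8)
The plan is to prove the two inclusions $\mathcal{L}\subseteq\langle\widehat{g}_1,\ldots,\widehat{g}_r\rangle$ and $\langle\widehat{g}_1,\ldots,\widehat{g}_r\rangle\subseteq\mathcal{L}$ separately, writing $\mathcal{G}=\langle\widehat{g}_1,\ldots,\widehat{g}_r\rangle$. The reverse inclusion $\mathcal{G}\subseteq\mathcal{L}$ is the easy one: each $g_i$ is a binomial in $I(\mathcal{L})$, and since the definition of a lattice ideal says $I(\mathcal{L})$ is generated by the binomials $t^{a^+}-t^{a^-}$ with $a\in\mathcal{L}$, one checks directly that any binomial $t^c-t^d$ lying in $I(\mathcal{L})$ must satisfy $c-d\in\mathcal{L}$ (this is part of the standard description of lattice ideals, and can be derived from Lemma~\ref{sept-21-04} applied with the generating set $\{t^{a^+}-t^{a^-}\mid a\in\mathcal{L}\}$, whose associated group is exactly $\mathcal{L}$). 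Hence $\widehat{g}_i=\widehat{(t^{\alpha_i}-t^{\beta_i})}=\alpha_i-\beta_i\in\mathcal{L}$ for all $i$, so $\mathcal{G}\subseteq\mathcal{L}$.

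For the inclusion $\mathcal{L}\subseteq\mathcal{G}$, take $a\in\mathcal{L}$; I want to show $a\in\mathcal{G}$. The binomial $f=t^{a^+}-t^{a^-}$ lies in $I(\mathcal{L})=(g_1,\ldots,g_r)$. If $f\neq 0$, consider the equivalence relation $\sim_\mathcal{G}$ on monomials of $S$. Since $f$ is already a binomial, it has at most two monomials, so either $f$ is simple with respect to $\sim_\mathcal{G}$ or its two monomials are inequivalent. If $t^{a^+}{\not\sim}_\mathcal{G}t^{a^-}$, then by Lemma~\ref{sept-21-04} each simple component of $f$ belongs to $I(\mathcal{L})$; but the simple components of $f$ are then the two monomials $t^{a^+}$ and $-t^{a^-}$ individually, forcing a monomial into $I(\mathcal{L})$. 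This contradicts Theorem~\ref{jun12-02}: each $t_i$ is a non-zerodivisor on $S/I(\mathcal{L})$, so no monomial can lie in $I(\mathcal{L})$ (equivalently, $I(\mathcal{L})$ contains no monomials since $1\notin I(\mathcal{L})$ as $I(\mathcal{L})$ is a proper ideal and monomials would generate the unit ideal after inverting the $t_i$). Hence we must have $t^{a^+}\sim_\mathcal{G}t^{a^-}$, i.e.\ $a^+-a^-=a\in\mathcal{G}$, as desired. The edge case $a^+=a^-$, i.e.\ $a=0$, is trivially in $\mathcal{G}$.

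The main obstacle is the clean justification that $I(\mathcal{L})$ contains no monomial; this is where Theorem~\ref{jun12-02} (the Eisenbud–Sturmfels characterization) does the real work, since if a monomial $t^c$ were in $I(\mathcal{L})$ then some $t_i$ would be a zerodivisor on $S/I(\mathcal{L})$ (multiplication by the appropriate power of $t_i$ kills a nonzero class), contradicting that $I(\mathcal{L})$ is a lattice ideal. Once that is in place, the argument via simple components is short. The final ``in particular'' statement is then immediate: if $L=I(\mathcal{L}_1)=I(\mathcal{L}_2)$, pick any binomial generating set $g_1,\ldots,g_r$ of $L$ (which exists since lattice ideals are binomial ideals); the first part gives $\mathcal{L}_1=\langle\widehat{g}_1,\ldots,\widehat{g}_r\rangle=\mathcal{L}_2$, so the lattice is uniquely determined by the ideal.
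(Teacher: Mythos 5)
Your proposal is correct and follows essentially the same route as the paper: both inclusions are handled via the simple-components Lemma~\ref{sept-21-04} (with respect to $\sim_\mathcal{G}$ for $\mathcal{L}\subseteq\mathcal{G}$, and via the characterization that a binomial $t^c-t^d$ lies in $I(\mathcal{L})$ iff $c-d\in\mathcal{L}$ for the reverse inclusion), together with the fact that a lattice ideal contains no monomials. The only difference is cosmetic: you justify the no-monomial fact explicitly through Theorem~\ref{jun12-02}, whereas the paper simply asserts that $t^{a^+},t^{a^-}\notin I(\mathcal{L})$.
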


\begin{proof} Consider the lattice
$\mathcal{G}=\langle\widehat{g}_1,\ldots,\widehat{g}_r \rangle$. First
we show the inclusion $\mathcal{L}\subset\mathcal{G}$. Take
$0\neq a\in\mathcal{L}$. We can write $a=a^+-a^-$. Then,
$f=t^{a^+}-t^{a^-}$ is in $I(\mathcal{L})=(g_1,\ldots,g_r)$. By
Lemma~\ref{sept-21-04}, 
any simple component of $f$ with respect to $\sim_\mathcal{G}$ is also in
$(g_1,\ldots,g_r)$. Since $t^{a^+}$ and $t^{a^-}$ are not in
$I(\mathcal{L})$, then $f$ is a simple component of $f$ with respect
to $\sim_\mathcal{G}$, i.e., $a=a^+-a^-\in \mathcal{G}$. Thus,
$\mathcal{L}\subset\mathcal{G}$. To show the other inclusion notice
that a binomial $t^a-t^b$ is in $I(\mathcal{L})$ if and only if
$a-b\in\mathcal{L}$. This follows from Lemma~\ref{sept-21-04}. 
Hence, $a_i-b_i\in\mathcal{L}$ for all $i$, i.e., $\mathcal{G}\subset\mathcal{L}$.  
\end{proof}

The {\it affine space\/} of dimension $s$ over $K$, 
denoted by $\mathbb{A}_K^s$, is the cartesian 
product $K^s$ of $s$-copies of $K$. Given a 
subset $I\subset S$ its {\it zero set\/} or 
{\it variety\/}, denoted by $V(I)$, is the set of all 
$a\in\mathbb{A}_K^s$ such that $f(a)=0$ for all $f\in I$. 

\begin{lemma}\label{dec23-11} Let $I$ be a binomial ideal of $S$ such
that $V(I,t_i)=\{0\}$ for all $i$. If $\mathfrak{p}$ is a prime ideal
containing $(I,t_m)$ for some $1\leq m\leq s$, then
$\mathfrak{p}=(t_1,\ldots,t_s)$.
\end{lemma}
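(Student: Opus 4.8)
The plan is to work in the quotient ring $\bar S = S/(I,t_m)$ and show that every variable $t_i$ becomes nilpotent there, so that the only prime of $\bar S$ is the image of $(t_1,\dots,t_s)$. Fix a prime $\mathfrak{p}\supseteq(I,t_m)$; we want $t_i\in\mathfrak{p}$ for all $i$. The hypothesis $V(I,t_i)=\{0\}$ says, over the algebraic closure, that the variety of $(I,t_i)$ is the origin, so by Hilbert's Nullstellensatz each $t_j$ lies in the radical $\sqrt{(I,t_i)}$; equivalently, for each pair $i,j$ there is an exponent $N_{ij}$ with $t_j^{N_{ij}}\in(I,t_i)$. (Strictly one wants this over $K$ itself, but since $I$ is a binomial ideal one can pass to $\bar K$, obtain the radical membership there, and descend: $t_j^N \in (I,t_i)\bar S$ and faithful flatness of $\bar S$ over $S$ — or just clearing denominators of the $\bar K$-coefficients — gives it over $K$.)

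First I would record the case $i=m$ of this: there is $N$ with $t_j^N\in(I,t_m)\subseteq\mathfrak{p}$ for every $j$, and since $\mathfrak{p}$ is prime this forces $t_j\in\mathfrak{p}$ for all $j=1,\dots,s$. Hence $(t_1,\dots,t_s)\subseteq\mathfrak{p}$. The reverse inclusion $\mathfrak{p}\subseteq(t_1,\dots,t_s)$ is immediate because $(t_1,\dots,t_s)$ is a maximal ideal of $S$ and $\mathfrak{p}$ is a proper ideal containing it; therefore $\mathfrak{p}=(t_1,\dots,t_s)$.

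In fact the argument is even shorter than the outline suggests: the only input actually needed is the single radical membership coming from $V(I,t_m)=\{0\}$, namely that $\sqrt{(I,t_m)}$ contains every $t_j$. So the structure is (i) invoke the Nullstellensatz for $(I,t_m)$ to get $t_j^{N}\in(I,t_m)$ for suitable $N$ and all $j$, taking care of the field-of-definition issue by base change to $\bar K$ and descent, using that $I$ has a binomial (hence $K$-rational) generating set; (ii) use primeness of $\mathfrak p\supseteq(I,t_m)$ to deduce $t_j\in\mathfrak p$; (iii) conclude by maximality of $(t_1,\dots,t_s)$.

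The only genuine subtlety — the "hard part," though it is mild — is the passage between the geometric hypothesis $V(I,t_m)=\{0\}$ (a statement about $\mathbb{A}^s_K$) and the ideal-theoretic consequence $t_j^N\in(I,t_m)$, since the Nullstellensatz in the form "$\sqrt{J}=\bigcap_{\mathfrak{m}\supseteq J}\mathfrak m$ and $V(J)=\{0\}$ imply $\sqrt J=(t_1,\dots,t_s)$" requires $K$ algebraically closed. I would handle this by noting $V(I,t_m)$ is defined by polynomials with coefficients in $K$ and that its being $\{0\}$ is preserved under base change (the binomial/lattice structure makes $I\otimes_K\bar K$ still the lattice ideal $I(\mathcal L)$, by Lemma~\ref{dec21-11} the lattice is unchanged), apply the Nullstellensatz over $\bar K$ to get $t_j^N\in (I,t_m)\bar S$, then descend the membership to $S$ by comparing $K$-coefficients (or by faithful flatness of $\bar S/S$). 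Everything after that is formal.
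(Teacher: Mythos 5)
There is a genuine gap, and it sits exactly at the step you yourself flag as the ``hard part.'' The hypothesis $V(I,t_m)=\{0\}$ is, by the paper's definition of $V$, a statement about points of $\mathbb{A}^s_K=K^s$ (and in the intended applications $K=\mathbb{F}_q$ is finite), so before you can invoke the Nullstellensatz you must show that the extended ideal $(I,t_m)\bar S$ has no nonzero zero over $\bar K$. Your justification for this transfer --- ``the binomial/lattice structure makes $I\otimes_K\bar K$ still the lattice ideal $I(\mathcal L)$, by Lemma~\ref{dec21-11} the lattice is unchanged'' --- does not work: first, the lemma only assumes $I$ is a \emph{binomial} ideal, not a lattice ideal (indeed in the paper this lemma is used, via Lemma~\ref{apr24-12-1} and Proposition~\ref{apr24-12}, precisely to \emph{prove} that certain binomial ideals are lattice ideals, so assuming the lattice structure here would be circular in those applications); second, even granting that $I\bar S$ is generated by the same binomials (which is trivially true), the statement ``same defining data over $\bar K$'' does not by itself convert the hypothesis about $K$-rational points into the needed statement about $\bar K$-points --- a priori the variety could acquire new nonzero points after base change. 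The transfer claim is in fact true, but proving it requires an argument, and the natural argument is exactly the combinatorial trick you have bypassed: given a nonzero $\bar K$-point $x$ of $(I,t_m)$, replace it by the $\{0,1\}$-point $c$ with $c_i=0$ iff $x_i=0$; one checks on a binomial generating set that $c$ still lies in $V(I,t_m)$, and $c$ has coordinates in the prime field, contradicting $V_K(I,t_m)=\{0\}$. Once you have to prove this, the Nullstellensatz/descent scaffolding becomes an unnecessary detour. (The descent direction you discuss, $t_j^N\in(I,t_m)\bar S\Rightarrow t_j^N\in(I,t_m)$, is fine by faithful flatness; that is not where the problem is.)

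For comparison, the paper's proof is entirely elementary and stays over $K$: letting $t_1,\dots,t_k$ be the variables lying in $\mathfrak p$, it shows that every binomial generator of $I$ either has support disjoint from $\{t_1,\dots,t_k\}$ or lies in $(t_1,\dots,t_k)$ (using that $\mathfrak p$ is prime), and then evaluates the generators at the point $c$ with $c_i=0$ for $i\le k$ and $c_i=1$ for $i>k$; this $c$ lies in $V(I,t_m)=\{0\}$, forcing $k=s$ and hence $\mathfrak p=(t_1,\dots,t_s)$. This is the same $\{0,1\}$-point idea your missing transfer lemma needs, applied directly to the prime $\mathfrak p$ rather than to a hypothetical $\bar K$-point, and it avoids the Nullstellensatz, base change, and any appeal to the lattice structure. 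If you want to salvage your route, state and prove the transfer statement as a separate lemma by the argument sketched above; but at that point you will have reproved the heart of the paper's proof and can conclude without the Nullstellensatz at all.
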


\begin{proof} Let $h_1,\ldots,h_r$ be a generating set for $I$
consisting of binomials. For simplicity of notation assume 
that $m=1$. We may assume that $t_1,\ldots,t_k$ are in $\mathfrak{p}$
and $t_i\notin\mathfrak{p}$ for $i>k$. If $t_i\in{\rm supp}(h_j)$ for
some $1\leq i\leq k$, say $h_j=t^{a_j}-t^{b_j}$ and $t_i\in {\rm
supp}(t^{a_j})$, then $t^{b_j}\in\mathfrak{p}$ and there is $1\leq
\ell\leq k$ such that $t_\ell$ is in the support of $t^{b_j}$. Thus, 
$h_j\subset(t_1,\ldots,t_k)$. Hence, for each $1\leq j\leq r$, either  
\begin{itemize}
\item[(i)\ ]${\rm supp}(h_j)\cap\{t_1,\ldots,t_k\}=\emptyset$ \mbox{ or}
\item[(ii)] $h_j\in (t_1,\ldots,t_k)$.
\end{itemize}
Consider the point $c=(c_i)\in\mathbb{A}_K^s$, with $c_i=0$ for $i\leq k$ and $c_i=1$
for $i>k$. If (i) occurs, then $h_j(c)=(t^{a_j}-t^{b_j})(c)=1-1=0$. 
If (ii) occurs, then $h_j(c)=(t^{a_j}-t^{b_j})(c)=0-0=0$. 
 Clearly the polynomial $t_1$ vanishes at $c$. Hence, $c\in
 V(I,t_1)=\{0\}$. Therefore, $k=s$. Thus, 
$\mathfrak{p}$ contains all the variables of $S$, i.e.,
$\mathfrak{p}=(t_1,\ldots,t_s)$.
\end{proof}

\begin{definition} Let $\omega=(\omega_1,\ldots,\omega_s)$ be an
integral vector with positive entries. 
A  lattice $\mathcal{L}$ is called {\it homogeneous\/} with respect to
$\omega$ if $\langle\omega,a\rangle=0$ for $a\in\mathcal{L}$. 
\end{definition}

A lattice $\mathcal{L}$ is homogeneous with respect to $\omega$ if and only if its lattice
ideal $I(\mathcal{L})$ is graded with respect to the grading
of $S$ induced by setting $\deg(t_i)=\omega_i$ for $i=1,\ldots,s$. The
standard grading of $S$ is obtained when $\omega=(1,\ldots,1)$. In
what follows by a graded ideal of $S$ we mean an ideal which is
graded with respect to the grading of $S$ induced by a vector $\omega$.

\begin{remark} If $\mathcal{L}$ is a homogeneous lattice in $\mathbb{Z}^s$ of
rank $s-1$, then $S/I(\mathcal{L})$ is a Cohen-Macaulay ring of
dimension $1$. This follows from Theorem~\ref{jun12-02} and using
the fact that the height of $I(\mathcal{L})$ is the rank of
$\mathcal{L}$ \cite[Proposition~7.5]{cca}.
\end{remark}

\begin{proposition}\label{apr24-12} Let $I\subset S$ be a graded binomial ideal. {\rm
(a)} If 
$V(I,t_i)=\{0\}$ for all $i$, then ${\rm ht}(I)=s-1$. 
{\rm (b)} If $I$ is a lattice ideal and ${\rm ht}(I)=s-1$,
then $V(I,t_i)=\{0\}$ for all $i$.
\end{proposition}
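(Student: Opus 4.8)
The plan is to treat the two implications via height estimates and the geometry of the sets $V(I,t_i)$, invoking Lemma~\ref{dec23-11} for part~(a) and Theorem~\ref{jun12-02} for part~(b). Write $\mathfrak{m}=(t_1,\ldots,t_s)$. Since $I$ is graded, $I\subseteq\mathfrak{m}$; more generally each $(I,t_i)$ is a graded ideal contained in $\mathfrak{m}$, hence proper, and its minimal primes are graded. I will use repeatedly the elementary fact that a \emph{graded} prime $\mathfrak{p}\subset S$ of height $s$ must equal $\mathfrak{m}$: indeed $S/\mathfrak{p}$ would then be a $0$-dimensional graded domain, hence a graded field concentrated in degree $0$, so $S/\mathfrak{p}=K$ and $\mathfrak{p}=\mathfrak{m}$. (This is where the absence of an algebraically closed base field is circumvented by the grading.)

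For part~(a) I would first show ${\rm ht}(I)\le s-1$. Each binomial generator $t^a-t^b$ of $I$ vanishes at the point $(1,\ldots,1)$, so $(1,\ldots,1)\in V(I)$ and $V(I)\ne\{0\}$. If ${\rm ht}(I)=s$, then every minimal prime of $I$ would be a graded prime of height $s$, hence equal to $\mathfrak{m}$; this would give $\sqrt{I}=\mathfrak{m}$ and $V(I)=\{0\}$, a contradiction. For the reverse inequality I use the variable $t_1$: by Lemma~\ref{dec23-11} every prime containing the proper ideal $(I,t_1)$ equals $\mathfrak{m}$, so $\sqrt{(I,t_1)}=\mathfrak{m}$ and ${\rm ht}(I,t_1)=s$. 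Since $(I,t_1)$ is obtained from $I$ by adjoining a single element, the generalized principal ideal theorem gives ${\rm ht}(I,t_1)\le{\rm ht}(I)+1$, whence ${\rm ht}(I)\ge s-1$. Combining the two bounds yields ${\rm ht}(I)=s-1$.

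For part~(b), fix $i$ and let $\mathfrak{q}$ be any minimal prime of $(I,t_i)$. Then $\mathfrak{q}$ contains some minimal prime $\mathfrak{p}$ of $I$, and since $I$ is a lattice ideal, Theorem~\ref{jun12-02} tells us $t_i$ is a non-zerodivisor on $S/I$; hence $t_i$ lies in no associated prime of $S/I$, in particular $t_i\notin\mathfrak{p}$. As $t_i\in\mathfrak{q}$, the inclusion $\mathfrak{p}\subsetneq\mathfrak{q}$ is strict, so ${\rm ht}(\mathfrak{q})\ge{\rm ht}(\mathfrak{p})+1\ge(s-1)+1=s$; thus ${\rm ht}(\mathfrak{q})=s$ and, $\mathfrak{q}$ being a graded prime, $\mathfrak{q}=\mathfrak{m}$. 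Since this holds for every minimal prime of $(I,t_i)$, we obtain $\sqrt{(I,t_i)}=\mathfrak{m}$ and therefore $V(I,t_i)=V(\mathfrak{m})=\{0\}$.

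Once Lemma~\ref{dec23-11} and Theorem~\ref{jun12-02} are available, the argument is essentially bookkeeping with heights, and the only point that needs a little care is the recurring reduction "graded prime of height $s$ equals $\mathfrak{m}$" noted above. It is worth flagging that the binomial hypothesis in part~(a) is used in exactly two ways — it puts $(1,\ldots,1)$ in $V(I)$, and it makes Lemma~\ref{dec23-11} applicable — whereas in part~(b) the lattice-ideal hypothesis enters only through the non-zerodivisor statement of Theorem~\ref{jun12-02}.
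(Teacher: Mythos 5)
Your proof is correct. Part (a) is essentially the paper's own argument: the bound $\mathrm{ht}(I)\le s-1$ via graded minimal primes of height $s$ being $\mathfrak{m}$ (your evaluation at $(1,\ldots,1)$ just makes explicit the paper's remark that $I$ cannot contain a power of any $t_i$), and the bound $\mathrm{ht}(I)\ge s-1$ via Lemma~\ref{dec23-11} applied to $(I,t_1)$. Part (b), however, takes a genuinely different route. The paper proves (b) constructively: since $\mathrm{rank}(\mathcal{L})=s-1$ and $\mathcal{L}$ is homogeneous, for each pair $i,k$ there are positive integers $r_i,r_k$ with $r_ie_i-r_ke_k\in\mathcal{L}$, and then Lemma~\ref{dec21-11}, Lemma~\ref{jun13-02} and Theorem~\ref{jun12-02} put the binomial $t_i^{r_i}-t_k^{r_k}$ inside $I$, which forces $V(I,t_i)=\{0\}$ directly. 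You instead argue prime-theoretically: a minimal prime $\mathfrak{q}$ of $(I,t_i)$ contains a minimal prime $\mathfrak{p}$ of $I$; since $t_i$ is a non-zerodivisor mod $I$ (Theorem~\ref{jun12-02}), the inclusion $\mathfrak{p}\subsetneq\mathfrak{q}$ is strict, so $\mathrm{ht}(\mathfrak{q})=s$ and, $\mathfrak{q}$ being graded, $\mathfrak{q}=\mathfrak{m}$, whence $\mathrm{rad}(I,t_i)=\mathfrak{m}$. Your version is shorter and isolates exactly what is needed (properness plus the non-zerodivisor property plus $\mathrm{ht}(I)=s-1$, so it in fact works for any such graded ideal, not only lattice ideals), while the paper's construction yields the explicit binomials $t_i^{r_i}-t_k^{r_k}\in I$, information it reuses later (the same construction reappears in the proof of Proposition~\ref{stcib-1-dim-lattice}). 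One small caveat in (a): the inequality $\mathrm{ht}(I,t_1)\le\mathrm{ht}(I)+1$ is not a consequence of the generalized principal ideal theorem alone (it can fail in non-catenary Noetherian rings); it holds here because $S$ is a polynomial ring over a field, e.g.\ via $\mathrm{ht}(J)=s-\dim S/J$ for every ideal $J$ of $S$ together with Krull's theorem applied in $S/I$ -- which is the kind of justification the paper gestures at with its parenthetical remark about $I$ being graded.
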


\begin{proof} (a) As $I$ is graded, all associated prime ideal of
$S/I$ are graded. Thus, all associated prime ideals of $S/I$ are
contained in $\mathfrak{m}=(t_1,\ldots,t_s)$. If ${\rm ht}(I)=s$,
then $\mathfrak{m}$ would be the only associated prime of
$S/I$, that is, $\mathfrak{m}$ is the radical of $I$, 
a contradiction because $I$ cannot contain a power of $t_i$ for any
$i$. Thus, ${\rm ht}(I)\leq s-1$. On the other hand, by Lemma~\ref{dec23-11},
the ideal $(I,t_s)$ has height $s$. Hence, $s={\rm ht}(I,t_s)\leq{\rm
ht}(I)+1$ (here we use the fact that $I$ is graded). 
Altogether, we get ${\rm ht}(I)=s-1$.

(b) Let $\mathcal{L}$ be the lattice that defines $I$ and let 
$g_1,\ldots,g_r$ be a generating set for
$I$ consisting of homogeneous binomials. By
Lemma~\ref{dec21-11}, one has the equality
$\mathcal{L}=\langle\widehat{g}_1,\ldots,\widehat{g}_r\rangle$. Notice
that $s-1={\rm ht}(I)={\rm rank}(\mathcal{L})$. Given
two distinct integers $1\leq i,k\leq s$, the vector space
$\mathbb{Q}^s$ is generated by
$e_k,\widehat{g}_1,\ldots,\widehat{g}_r$. Hence, as $\mathcal{L}$ is
homogeneous with respect to $\omega=(\omega_1,\ldots,\omega_s)$, there
are positive integers $r_i$ and $r_k$ such that
$r_ie_i-r_ke_k\in\mathcal{L}$ and $r_i\omega_i-r_k\omega_k=0$. By
Lemma~\ref{jun13-02}, there is $t^\delta$ such that
$t^\delta(t_i^{r_i}-t_k^{r_k})$ is in $I$. Hence, by
Theorem~\ref{jun12-02}, $t_i^{r_i}-t_k^{r_k}$ is in $I$. 
Therefore, $V(I,t_i)=\{0\}$ for $i=1,\ldots,s$. 
\end{proof}

\begin{example} Let $S=\mathbb{Q}[t_1,t_2,t_3]$. The ideal 
$I=(t_1^2-t_2t_3,t_1^2-t_1t_2)$ has height $2$ is not a lattice ideal 
and $V(I,t_1)\neq \{0\}$, that is, Proposition~\ref{apr24-12}(b) only holds for lattice ideals. 
\end{example}

\begin{definition} An ideal $I\subset S$ is called a {\it complete intersection\/} if 
there exists $g_1,\ldots,g_{r}$ in $S $ such that $I=(g_1,\ldots,g_{r})$, 
where $r$ is the height of $I$. 
\end{definition}

Recall that a graded ideal $I$ is a complete
intersection if and only if $I$ is generated by a homogeneous regular 
sequence with ${\rm ht}(I)$ elements (see \cite[Proposition~1.3.17, Lemma~1.3.18]{monalg}).

\begin{lemma}\label{apr24-12-1} Let $I\subset S$ be a graded
binomial ideal. If $V(I,t_i)=\{0\}$ for all $i$ and $I$ is
a complete intersection, then $I$ is a lattice ideal.
\end{lemma}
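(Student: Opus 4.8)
The plan is to show that $t_i$ is a non-zero divisor on $S/I$ for every $i$, and then invoke Theorem~\ref{jun12-02} to conclude that the binomial ideal $I$ is a lattice ideal. First I would record what the hypotheses give us: by Proposition~\ref{apr24-12}(a), $V(I,t_i)=\{0\}$ for all $i$ forces ${\rm ht}(I)=s-1$, so since $I$ is a complete intersection we may write $I=(g_1,\ldots,g_{s-1})$ where $g_1,\ldots,g_{s-1}$ is a homogeneous regular sequence of binomials. In particular $S/I$ is Cohen--Macaulay of dimension $1$, so all its associated primes are minimal of height $s-1$. Because $I$ is graded, these associated primes are graded, hence contained in $\mathfrak{m}=(t_1,\ldots,t_s)$, but none of them can equal $\mathfrak{m}$ since $\dim S/I=1>0$.

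The heart of the argument is to show that no variable $t_i$ lies in any associated prime $\mathfrak{p}$ of $S/I$. Suppose $t_i\in\mathfrak{p}$ for some minimal prime $\mathfrak{p}\supset I$. Then $\mathfrak{p}\supseteq (I,t_i)$, and Lemma~\ref{dec23-11} (which applies because $I$ is a binomial ideal with $V(I,t_i)=\{0\}$ for all $i$) tells us $\mathfrak{p}=(t_1,\ldots,t_s)=\mathfrak{m}$. But we just observed $\mathfrak{m}$ cannot be an associated prime of the one-dimensional ring $S/I$ --- contradiction. Hence no $t_i$ belongs to any associated prime of $S/I$, which is exactly the statement that each $t_i$ is a non-zero divisor on $S/I$. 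By Theorem~\ref{jun12-02}, the binomial ideal $I$ is therefore a lattice ideal.

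I expect the main obstacle to be purely bookkeeping: making sure we are entitled to say that $S/I$ is Cohen--Macaulay of dimension exactly $1$ (so that $\mathfrak{m}$ is not associated) rather than merely that $I$ has height $s-1$. This follows because a complete intersection is unmixed --- every associated prime has height equal to ${\rm ht}(I)=s-1$ --- so $S/I$ has pure dimension $1$ and $\mathfrak{m}$, of height $s$, cannot be associated. An alternative phrasing avoids Cohen--Macaulayness entirely: a regular sequence generates an unmixed ideal, so again all associated primes have height $s-1<s$, and the same contradiction with Lemma~\ref{dec23-11} applies. Either way, the only slightly delicate point is to combine the unmixedness of the complete intersection with Lemma~\ref{dec23-11}; everything else is immediate from the results already established in the excerpt.
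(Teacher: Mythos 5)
Your proposal is correct and follows essentially the same route as the paper: use Proposition~\ref{apr24-12}(a) to get ${\rm ht}(I)=s-1$, note that an associated prime containing some $t_i$ would contain $(I,t_i)$ and hence equal $\mathfrak{m}$ by Lemma~\ref{dec23-11}, contradict this with the unmixedness of a complete intersection, and conclude via Theorem~\ref{jun12-02}. The extra remarks about Cohen--Macaulayness versus unmixedness are fine but not a genuinely different argument.
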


\begin{proof} By Proposition~\ref{apr24-12}(a), the height of $I$ is $s-1$.
It suffices to prove that $t_i$ is a non-zero divisor of $S/I$ for
all $i$ (see Theorem~\ref{jun12-02}). If $t_i$ is a
zero divisor of $S/I$ for some $i$, there is an associated prime 
ideal $\mathfrak{p}$ of $S/I$ containing $(I,t_i)$. Hence, using
Lemma~\ref{dec23-11}, we get 
that $\mathfrak{p}=\mathfrak{m}$, a contradiction because $I$ is a
complete intersection of height $s-1$ 
and all associated prime ideals of $I$ have height equal to $s-1$ (see
\cite[Proposition~1.3.22]{monalg}). 
\end{proof}

\begin{example} Let $S=\mathbb{Q}[t_1,t_2,t_3]$. The ideal 
$I=(t_1^2-t_2t_3,t_2^2-t_3^2)$ has height $2$ and $V(I,t_i)=\{0\}$
for all $i$. Thus, by Lemma~\ref{apr24-12-1}, $I$ is a lattice ideal. 
\end{example}

We come to one of the main results of this paper.

\begin{theorem}\label{ci-lattice} 
Let $L$ be the lattice ideal of a homogeneous lattice $\mathcal{L}$ in
$\mathbb{Z}^s$. If $V(L,t_i)=\{0\}$ for all $i$, then $L$ is a
complete intersection if and only 
if there are 
homogeneous binomials $h_1,\ldots,h_{s-1}$ in $L$ satisfying the following conditions\/{\rm :}
\begin{itemize}
\item[$(\mathrm{i})$]
$\mathcal{L}=\langle\widehat{h}_1,\ldots,\widehat{h}_{s-1}\rangle$.
\item[$(\mathrm{ii})$] $V(h_1,\ldots,h_{s-1},t_i)=\{0\}$ for all $i$.
\item[$(\mathrm{iii})$] $h_i=t^{a_i^+}-t^{a_i^-}$ for $i=1,\ldots,s-1$.
\end{itemize}
\end{theorem}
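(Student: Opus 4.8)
The plan is to prove the two implications of the equivalence separately. The implication ``$\Leftarrow$'' is a short assembly of lemmas already available, and I would dispose of it first; the implication ``$\Rightarrow$'' carries the real content and rests on a single observation about the graded Nakayama lemma.

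For ``$\Leftarrow$'', assume homogeneous binomials $h_1,\ldots,h_{s-1}$ satisfying (i)--(iii) are given, and set $I=(h_1,\ldots,h_{s-1})$. Then $I$ is a graded binomial ideal with $V(I,t_i)=V(h_1,\ldots,h_{s-1},t_i)=\{0\}$ for all $i$ by (ii), so Proposition~\ref{apr24-12}(a) gives ${\rm ht}(I)=s-1$. As $I$ is generated by ${\rm ht}(I)$ elements it is a complete intersection, whence Lemma~\ref{apr24-12-1} shows $I$ is a lattice ideal. By Lemma~\ref{dec21-11} the unique lattice defining $I$ is $\langle\widehat{h}_1,\ldots,\widehat{h}_{s-1}\rangle$, which equals $\mathcal{L}$ by (i); therefore $I=I(\mathcal{L})=L$ and $L$ is a complete intersection. (Condition (iii) is not used in this direction.)

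For ``$\Rightarrow$'', assume $L$ is a complete intersection. By Proposition~\ref{apr24-12}(a) one has ${\rm ht}(L)=s-1$, so $L$ is generated by $s-1$ elements and, since its minimal number of generators is at least its height, that number is exactly $s-1$; writing $\mathfrak{m}=(t_1,\ldots,t_s)$ this says $\dim_K L/\mathfrak{m}L=s-1$. The point I would exploit is that, by definition, $L=I(\mathcal{L})$ is generated as an ideal by the set $B=\{t^{a^+}-t^{a^-}\mid a\in\mathcal{L}\}$, and that every element of $B$ is homogeneous, since $\mathcal{L}$ is homogeneous with respect to $\omega$. Writing $S=K\oplus\mathfrak{m}$, the equality $L=\sum_{b\in B}Sb$ yields $L={\rm span}_K(B)+\mathfrak{m}L$, so the classes of the elements of $B$ span the $(s-1)$-dimensional space $L/\mathfrak{m}L$. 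I would then choose $h_1,\ldots,h_{s-1}\in B$ whose classes form a $K$-basis of $L/\mathfrak{m}L$; by graded Nakayama (applicable because $S$ is positively graded by $\omega$ with $S_0=K$) these homogeneous elements generate $L$, that is, $L=(h_1,\ldots,h_{s-1})$.

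It remains to check that these $h_i$ meet (i)--(iii). Each $h_i$ belongs to $B$, hence $h_i=t^{a_i^+}-t^{a_i^-}$ with $a_i=\widehat{h}_i\in\mathcal{L}$, which is (iii). Since $h_1,\ldots,h_{s-1}$ are binomials generating the lattice ideal $L=I(\mathcal{L})$, Lemma~\ref{dec21-11} gives $\mathcal{L}=\langle\widehat{h}_1,\ldots,\widehat{h}_{s-1}\rangle$, which is (i). Finally $V(h_1,\ldots,h_{s-1},t_i)=V(L,t_i)=\{0\}$ for all $i$ by hypothesis, which is (ii). Thus the entire difficulty is concentrated in the forward direction, where it amounts to the observation that $L$ is already generated by the homogeneous binomial family $B$, so a minimal generating set can be sifted out of $B$; the one subtlety to watch is that graded Nakayama must be invoked with respect to the positive $\omega$-grading rather than the $\mathbb{Z}^s/\mathcal{L}$-grading, which need not be positive.
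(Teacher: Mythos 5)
Your proof is correct, but it takes a genuinely different route from the paper's in both directions. For the implication ``$\Rightarrow$'' the paper does not use Nakayama at all: it quotes the known fact that a graded binomial complete intersection is generated by homogeneous binomials (see \cite[Lemma~2.2.16]{monalg}), and then obtains (i) from Lemma~\ref{dec21-11} and (iii) via Theorem~\ref{jun12-02} (the variables are nonzerodivisors modulo $L$, so the generators can be taken with the two monomials having disjoint support). Your graded-Nakayama sifting from the canonical generating set $B=\{t^{a^+}-t^{a^-}\mid a\in\mathcal{L}\}$ is self-contained, replaces that citation, and delivers condition (iii) automatically since elements of $B$ already have disjoint supports; your caveat that Nakayama must be applied with respect to the positive $\omega$-grading, so that $\mathfrak{m}=(t_1,\ldots,t_s)=S_+$, is exactly the right point to flag. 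For ``$\Leftarrow$'' the paper argues via Lemma~\ref{jun13-02}: it produces a monomial $t^\gamma$ with $t^\gamma L\subset I$ and then cancels $t^\gamma$ because (ii), Lemma~\ref{apr24-12-1} and Theorem~\ref{jun12-02} make each $t_i$ a nonzerodivisor modulo $I$. You shortcut this saturation step: once Proposition~\ref{apr24-12}(a) and Lemma~\ref{apr24-12-1} show that $I=(h_1,\ldots,h_{s-1})$ is a lattice ideal, the uniqueness statement of Lemma~\ref{dec21-11} identifies its lattice as $\langle\widehat{h}_1,\ldots,\widehat{h}_{s-1}\rangle=\mathcal{L}$, whence $I=I(\mathcal{L})=L$. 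Both arguments rest on the same structural lemmas (Proposition~\ref{apr24-12}, Lemma~\ref{apr24-12-1}, Lemma~\ref{dec21-11}); what yours buys is a more self-contained forward direction and a cleaner backward direction, while the paper's version makes explicit the monomial-multiplication-and-cancellation mechanism that reappears in the proof of Theorem~\ref{adelantado1}.
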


\begin{proof} As $\mathcal{L}$ is homogeneous, there is 
an integral vector $\omega=(\omega_1,\ldots,\omega_s)$ with positive entries such that
$\langle\omega,a\rangle=0$ for $a\in\mathcal{L}$. Then, its 
lattice ideal $L$ is graded with respect to the grading
of $S$ induced by setting $\deg(t_i)=\omega_i$ for $i=1,\ldots,s$. 
By Proposition~\ref{apr24-12}, the height of
$L$ is $s-1$.

$\Rightarrow$)  Since $L$ is a graded binomial ideal
which is a complete intersection, it is well known that
$L$ is an ideal generated by homogeneous binomials
$h_1,\ldots,h_{s-1}$ (see for instance 
\cite[Lemma~2.2.16]{monalg}). Then, by Lemma~\ref{dec21-11} and
Theorem~\ref{jun12-02}, (i) and (iii) hold. 
From the equality $(L,t_i)=(h_1,\ldots,h_{s-1},t_i)$,
we get
$$
\{0\}=V(L,t_i)=V(h_1,\ldots,h_{s-1},t_i). 
$$
Thus, $V(h_1,\ldots,h_{s-1},t_i)=\{0\}$ for all $i$, i.e., (ii) holds.

$\Leftarrow$) We set $I=(h_1,\ldots,h_{s-1})$. By 
hypothesis $I\subset L$. Thus, we need only show the inclusion 
$L\subset I$. Let $g_1,\ldots,g_m$ be a generating set 
of $L$ consisting of binomials, then $\widehat{g}_i\in\mathcal{L}$
for all $i$. Using condition (i) and 
Lemma~\ref{jun13-02}, for each $i$ there is a monomial $t^{\gamma_i}$ 
such that $t^{\gamma_i}g_i\in I$. Hence, $t^\gamma
L\subset I$, where $t^\gamma$ is equal to
$t^{\gamma_1}\cdots t^{\gamma_m}$. By (ii) and Proposition~\ref{apr24-12}, the height
of $I$ is $s-1$. This means that $I$ is a complete intersection. As
$t^\gamma L\subset I$, to show the inclusion 
$L\subset I$, it suffices to notice that by
(ii), Lemma~\ref{apr24-12-1} and Theorem~\ref{jun12-02} $t_i$ is a
non-zero divisor of $S/I$ for all $i$.  
\end{proof}

\begin{remark} The result remains valid if we remove condition (iii),
i.e., condition (iii) is redundant. In both implications of the
theorem the set $h_1,\ldots,h_{s-1}$ is shown to generate $L$.
\end{remark}

\begin{definition}
An ideal $I$ is called a {\it binomial set 
theoretic complete intersection\/}  if 
there are binomials $g_1,\ldots,g_{r}$ such that ${\rm rad}(I)={\rm
rad}(g_1,\ldots,g_{r})$, where $r$ is the height of $I$. 
\end{definition}

The next result gives a family of binomial set 
theoretic complete intersections. 
We show this result using a theorem of Katsabekis, Morales and 
Thoma \cite[Theorem~4.4(2)]{katsabekis-morales-thoma}.

\begin{proposition}\label{stcib-1-dim-lattice} If $K$ is a field of
positive characteristic and $L\subset S$ is a graded
lattice ideal of dimension $1$, then $L$ is a binomial set theoretic complete
intersection.
\end{proposition}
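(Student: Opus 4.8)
The plan is to reduce Proposition~\ref{stcib-1-dim-lattice} to the cited theorem of Katsabekis, Morales and Thoma \cite[Theorem~4.4(2)]{katsabekis-morales-thoma}, which (for lattice ideals over a field of positive characteristic) asserts that a lattice ideal $I(\mathcal{L})$ of height $r$ is a binomial set theoretic complete intersection provided the lattice $\mathcal{L}$ admits a suitable system of generators — concretely, that there exist $r$ vectors in $\mathcal{L}$ whose $\mathbb{Z}$-span is of finite index in $\mathcal{L}$ (equivalently, has the same rank), so that the corresponding $r$ binomials cut out $I(\mathcal{L})$ up to radical. So the real content is a purely lattice-theoretic statement: a homogeneous lattice $\mathcal{L}\subset\mathbb{Z}^s$ of rank $s-1$ (the rank equals the height $s-1$ of $L$ by the remark before Proposition~\ref{apr24-12}, since $\dim S/L=1$) contains $s-1$ vectors spanning a sublattice of the same rank, and moreover one that still cuts out $L$ up to radical in characteristic $p$.

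The first step is to record that $L=I(\mathcal{L})$ for a unique homogeneous lattice $\mathcal{L}$ (Lemma~\ref{dec21-11}) of rank $s-1$. Next I would produce $s-1$ vectors $\widehat{h}_1,\dots,\widehat{h}_{s-1}\in\mathcal{L}$ with $\mathbb{Z}\langle\widehat{h}_1,\dots,\widehat{h}_{s-1}\rangle$ of rank $s-1$: since $\mathcal{L}$ is free of rank $s-1$, one may simply take a $\mathbb{Z}$-basis, which in fact generates $\mathcal{L}$ on the nose. By Proposition~\ref{apr24-12}(b) applied to $L$ (or directly, since $\mathcal{L}$ is homogeneous of rank $s-1$), one has $V(L,t_i)=\{0\}$, and one can then invoke Theorem~\ref{ci-lattice}-type reasoning — or more precisely the verification underlying \cite[Theorem~4.4(2)]{katsabekis-morales-thoma} — to see that the associated binomials $h_i=t^{\widehat{h}_i^+}-t^{\widehat{h}_i^-}$ generate $L$ up to radical. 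In positive characteristic the extra leverage is the standard trick of replacing a vector $a$ by $p^k a$: since $(t^{a^+}-t^{a^-})^{p^k}=t^{p^k a^+}-t^{p^k a^-}$ in characteristic $p$, a binomial and its ``Frobenius powers'' have the same radical, which is exactly what lets one pass from a spanning set of a finite-index sublattice to a radical-generating set of the full lattice ideal with only $s-1$ binomials.

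Concretely the argument runs: let $\widehat{h}_1,\dots,\widehat{h}_{s-1}$ be a $\mathbb{Z}$-basis of $\mathcal{L}$, set $h_i=t^{\widehat{h}_i^+}-t^{\widehat{h}_i^-}$ and $I=(h_1,\dots,h_{s-1})$. Then $I\subset L$ and $\mathrm{ht}(I)=s-1=\mathrm{ht}(L)$ (using Proposition~\ref{apr24-12}(a), since $V(I,t_i)=V(L,t_i)=\{0\}$). Over a field of characteristic $p>0$, \cite[Theorem~4.4(2)]{katsabekis-morales-thoma} gives $\mathrm{rad}(I)=\mathrm{rad}(L)=L$ (the last equality because $L$, being a lattice ideal, is radical in positive characteristic, or one may simply state $\mathrm{rad}(L)=L$ is not needed — only $\mathrm{rad}(I)=\mathrm{rad}(L)$ matters for the definition). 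Hence $L$ is a binomial set theoretic complete intersection.

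The main obstacle is making sure the hypotheses of \cite[Theorem~4.4(2)]{katsabekis-morales-thoma} are met in the form stated there: that theorem may be phrased for positive lattice ideals or with a particular normalization of the lattice, so one must check that a homogeneous lattice of rank $s-1$ with $V(L,t_i)=\{0\}$ fits (this is essentially the content of Proposition~\ref{apr24-12} together with the observation that such an $L$ has no monomial zero-divisors by Theorem~\ref{jun12-02}), and that the characteristic-$p$ Frobenius argument is either already packaged inside their statement or supplied here. Everything else — existence of a basis of a free abelian group, stability of radicals under $p$-th powers, the height count — is routine.
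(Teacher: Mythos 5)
Your reduction misstates the hypothesis of \cite[Theorem~4.4(2)]{katsabekis-morales-thoma}, and in doing so it skips the only nontrivial step of the proof. That theorem is not available for every lattice of rank equal to the height: its hypothesis is that the \emph{configuration} $A=\{a_1,\ldots,a_s\}$ attached to $\mathcal{L}$ (the images of $e_1,\ldots,e_s$ under an isomorphism $\mathbb{Z}^s/{\rm Sat}(\mathcal{L})\cong\mathbb{Z}$) is \emph{full} in the sense of their Definition~4.3. Under your reading (``there exist $s-1$ vectors of $\mathcal{L}$ spanning a finite-index sublattice'') the hypothesis would be vacuous, since every rank-$(s-1)$ lattice has a basis, and the proposition would follow for \emph{all} lattice ideals in positive characteristic with no use of the hypotheses ``graded of dimension $1$''. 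Those hypotheses are precisely what the paper's proof uses: it verifies fullness by observing that homogeneity with respect to a positive weight vector $\omega$ together with ${\rm rank}(\mathcal{L})=s-1$ yields $r_ie_i-r_ke_k\in\mathcal{L}$ with $r_i,r_k>0$, hence $r_ia_i=r_ka_k$, so all the $a_i$ have the same sign. That verification is the actual content of the argument and is missing from your proposal.

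Your concrete argument also breaks down. If $\widehat{h}_1,\ldots,\widehat{h}_{s-1}$ is a $\mathbb{Z}$-basis of $\mathcal{L}$ and $h_i=t^{\widehat{h}_i^+}-t^{\widehat{h}_i^-}$, the claims $V(h_1,\ldots,h_{s-1},t_i)=V(L,t_i)=\{0\}$ and ${\rm rad}(h_1,\ldots,h_{s-1})=L$ are false in general: if they held for some basis, conditions (i)--(iii) of Theorem~\ref{ci-lattice} would be satisfied and $L$ would be a complete intersection, so every $1$-dimensional graded lattice ideal would be a complete intersection, which is absurd. Concretely, for the toric ideal $P$ of the monomial curve $(u^3,u^4,u^5)$, the vectors $(1,-2,1)$ and $(2,1,-2)$ form a basis of the corresponding lattice, yet $V(t_1t_3-t_2^2,\ t_1^2t_2-t_3^2)$ contains the entire $t_1$-axis, so the radical of these two binomials is not $P$ in any characteristic (and no other basis can work, since $P$ is not a complete intersection). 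The Frobenius identity $(t^{a^+}-t^{a^-})^{p^k}=t^{p^ka^+}-t^{p^ka^-}$ only lets one pass from a sublattice of $p$-power index to the full lattice; it does nothing about the extraneous components outside the torus, which is exactly what the construction in \cite{katsabekis-morales-thoma} for full configurations is designed to eliminate (their binomials are not basis binomials). Finally, your parenthetical that a lattice ideal is radical in positive characteristic is incorrect (e.g.\ $(t_1^p-t_2^p)=((t_1-t_2)^p)$ in characteristic $p$), though, as you note, it is not needed for the definition of a binomial set theoretic complete intersection.
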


\begin{proof} Let $\mathcal{L}$ be the homogeneous lattice of
$\mathbb{Z}^s$ such that $L=I(\mathcal{L})$. Notice that $\mathcal{L}$ is
a lattice of rank $s-1$ because ${\rm ht}(L)={\rm
rank}(\mathcal{L})$. Thus, there is an isomorphism of groups 
$\psi\colon\mathbb{Z}^s/{\rm Sat}(\mathcal{L})\rightarrow\mathbb{Z}$,
where ${\rm Sat}(\mathcal{L})$ is the saturation of $\mathcal{L}$
consisting of all $a\in\mathbb{Z}^s$ such that $da\in\mathcal{L}$ for
some $0\neq d\in\mathbb{Z}$. For each $1\leq i\leq s$, we set
$a_i=\psi(e_i+{\rm Sat}(\mathcal{L}))$, where $e_i$ is the $i${\it th}
unit vector in $\mathbb{Z}^s$. Following \cite{katsabekis-morales-thoma}, 
the multiset $A=\{a_1,\ldots,a_s\}$ is called the configuration of
vectors associated to $\mathcal{L}$. Recall 
that $s-1={\rm rank}(\mathcal{L})$. Hence, as $\mathcal{L}$ is
homogeneous with respect to $\omega=(\omega_1,\ldots,\omega_s)$, there
are positive integers $r_i$ and $r_k$ such that
$r_ie_i-r_ke_k\in\mathcal{L}$ and $r_i\omega_i-r_k\omega_k=0$. Thus,
$r_ia_i=r_ka_k$ and $a_i$ has the same sign as $a_k$. This means
that $a_1,\ldots,a_s$ are all positive or all negative.  It follows
that $A$ is a full configuration in the sense  
of \cite[Definition~4.3]{katsabekis-morales-thoma}. Thus,
$I(\mathcal{L})$ is a binomial set theoretic complete intersection by
\cite[Theorem~4.4(2)]{katsabekis-morales-thoma} and its proof.
\end{proof}

\begin{corollary}{\cite{stcib}}\label{LaConcepcion-dec24-2011} 
If $P\subset S$ is the toric ideal of a monomial curve, then 
$P$ is a complete intersection if and only if there are
homogeneous binomials $g_1,\ldots,g_{s-1}$ in $P$, with
$g_i=t^{a_i^+}-t^{a_i^-}$ for all $i$, such that the following
conditions hold\/{\rm :}
\begin{itemize}
\item[$(\mathrm{a})$] $\mathcal{L}_1=\langle
\widehat{g}_1,\ldots,\widehat{g}_{s-1}\rangle$, where
$\mathcal{L}_1$ is the lattice that defines $P$. 
\item[$(\mathrm{b})$] $V(g_1,\ldots,g_{s-1},t_i)=\{0\}$ for $i=1,\ldots,s$.
\end{itemize}
\end{corollary}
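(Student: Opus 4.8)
The plan is to derive Corollary~\ref{LaConcepcion-dec24-2011} as a direct specialization of Theorem~\ref{ci-lattice}. First I would recall that the toric ideal $P$ of a monomial curve is, by definition, the toric ideal associated to a sequence $d_1,\ldots,d_s$ of positive integers; concretely $P=I(\mathcal{L}_1)$ where $\mathcal{L}_1=\{a\in\mathbb{Z}^s\,\vert\,\langle d,a\rangle=0\}$ with $d=(d_1,\ldots,d_s)$. Thus $\mathcal{L}_1$ is a homogeneous lattice with respect to the weight vector $\omega=d$ (all entries positive), so $P$ is a graded binomial ideal, and since $P$ is the defining ideal of the monomial curve it has dimension $1$, equivalently ${\rm ht}(P)=s-1$ and ${\rm rank}(\mathcal{L}_1)=s-1$.

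The single point that needs to be verified before Theorem~\ref{ci-lattice} applies is the hypothesis $V(P,t_i)=\{0\}$ for all $i$. This follows from Proposition~\ref{apr24-12}(b): $P$ is a lattice ideal with ${\rm ht}(P)=s-1$, hence $V(P,t_i)=\{0\}$ for $i=1,\ldots,s$. (Alternatively one sees it directly: $P$ contains the binomials $t_i^{d_j/g_{ij}}-t_j^{d_i/g_{ij}}$ where $g_{ij}=\gcd(d_i,d_j)$, and modulo $t_i$ this forces a power of $t_j$ to vanish, so the only solution of $(P,t_i)$ is the origin.) With this in hand, Theorem~\ref{ci-lattice} says precisely that $P$ is a complete intersection if and only if there exist homogeneous binomials $h_1,\ldots,h_{s-1}\in P$ satisfying conditions (i), (ii), (iii) of that theorem, and by the Remark following the theorem condition (iii) is redundant. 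Renaming $h_i$ as $g_i$, condition (i) becomes condition (a) and condition (ii) becomes condition (b); condition (iii)/the form $g_i=t^{a_i^+}-t^{a_i^-}$ is included in the statement only because it is automatic once (a) holds (a lattice ideal is generated by the binomials $t^{a^+}-t^{a^-}$, $a\in\mathcal{L}_1$). This gives exactly the asserted equivalence.

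I do not expect a genuine obstacle here, since the corollary is literally the restriction of Theorem~\ref{ci-lattice} to the monomial-curve case; the only thing requiring a sentence of justification is that $P$ is indeed a homogeneous (with respect to a positive weight vector) lattice ideal of height $s-1$, so that the theorem's hypotheses are met. The one place to be slightly careful is matching conventions: one must note that ``toric ideal of a monomial curve'' is, in the terminology of Section~\ref{intro-ci-vanishing}, a graded lattice ideal of dimension $1$ — this is standard (see \cite[Corollary~7.1.4]{monalg}) — and then invoke Lemma~\ref{dec21-11} to know that the lattice $\mathcal{L}_1$ defining $P$ is uniquely determined, so that condition (a) is unambiguous. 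After that, the proof is a one-line citation of Theorem~\ref{ci-lattice} together with the Remark that removes condition (iii).
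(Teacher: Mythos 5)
Your proposal is correct and follows essentially the same route as the paper: identify $P$ as the lattice ideal of the homogeneous lattice $\mathcal{L}_1=\ker(\psi)$ of rank $s-1$, verify the hypothesis $V(P,t_i)=\{0\}$ for all $i$, and then quote Theorem~\ref{ci-lattice}. The only cosmetic difference is that the paper checks $V(P,t_i)=\{0\}$ directly from the binomials $t_i^{\omega_j}-t_j^{\omega_i}\in P$, whereas you deduce it from Proposition~\ref{apr24-12}(b) (mentioning the direct check as an alternative); both are valid and equivalent in substance.
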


\begin{proof} There are positive integers $\omega_1,\ldots,\omega_s$
such that $P$ is the kernel of 
the epimorphism of $K$-algebras: 
$$\varphi\colon K[t_1,\ldots,t_s]\longrightarrow
K[y_1^{\omega_1},\ldots,y_1^{\omega_s}],\ \ \ \ \ 
f\stackrel{\varphi}{\longmapsto}
f(y_1^{\omega_1},\ldots,y_1^{\omega_s}),$$ 
where $y_1$ is a new variable. Consider the homomorphism of $\mathbb{Z}$-modules
$\psi\colon\mathbb{Z}^s\rightarrow\mathbb{Z}$, $e_i\mapsto
\omega_i$. According to \cite[Corollary~7.1.4]{monalg}, the toric ideal $P$ is
the lattice ideal 
of the homogeneous lattice
$\mathcal{L}_1={\rm ker}(\psi)$ with respect to the vector
$\omega=(\omega_1,\ldots,\omega_s)$, that is $P=I(\mathcal{L}_1)$. In
particular the height of $P$ is $s-1$. The binomial
$t_i^{\omega_j}-t_j^{\omega_i}$ is in $P$ for all $i,j$. Thus, 
$V(I(\mathcal{L}_1),t_i)=\{0\}$ for all $i$. Then, the result follows 
from Theorem~\ref{ci-lattice}.
\end{proof}

\begin{corollary}{\rm\cite{moh}}\label{stcib-p} Let $P\subset S$ be the toric
ideal of a monomial curve. If ${\rm char}(K)>0$,
then $P$ is 
a binomial set theoretic complete
intersection.
\end{corollary}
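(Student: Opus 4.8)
The plan is to derive this as an immediate consequence of Proposition~\ref{stcib-1-dim-lattice}, once we observe that the toric ideal of a monomial curve belongs to the family of graded lattice ideals of dimension $1$. Concretely, as recalled in the proof of Corollary~\ref{LaConcepcion-dec24-2011}, there are positive integers $\omega_1,\ldots,\omega_s$ such that $P$ is the kernel of the $K$-algebra homomorphism $t_i\mapsto y_1^{\omega_i}$, and by \cite[Corollary~7.1.4]{monalg} one has $P=I(\mathcal{L}_1)$, where $\mathcal{L}_1=\ker(\psi)$ with $\psi\colon\mathbb{Z}^s\to\mathbb{Z}$, $e_i\mapsto\omega_i$. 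The lattice $\mathcal{L}_1$ is homogeneous with respect to $\omega=(\omega_1,\ldots,\omega_s)$, so $P$ is graded, and it has rank $s-1$ (equivalently $\mathrm{ht}(P)=s-1$, so $\dim S/P=1$). Thus $P$ is a graded lattice ideal of dimension $1$.

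Having checked these hypotheses, I would simply invoke Proposition~\ref{stcib-1-dim-lattice}: since $\mathrm{char}(K)>0$ and $P$ is a graded lattice ideal of dimension $1$, it follows that $P$ is a binomial set theoretic complete intersection. This completes the argument.

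There is essentially no obstacle here: all the substance has been absorbed into Proposition~\ref{stcib-1-dim-lattice}, which itself rests on \cite[Theorem~4.4(2)]{katsabekis-morales-thoma} and the verification that the configuration of vectors associated to a homogeneous rank $(s-1)$ lattice is full. The only point that requires a word of justification is that the toric ideal of a monomial curve really is of the asserted form, i.e.\ a lattice ideal of a homogeneous lattice of the correct rank, and this is exactly the setup already spelled out in the proof of Corollary~\ref{LaConcepcion-dec24-2011}; so in practice the proof can just cite that passage together with Proposition~\ref{stcib-1-dim-lattice}.
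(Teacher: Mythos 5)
Your proposal is correct and follows the paper's own route exactly: the paper also notes (via the proof of Corollary~\ref{LaConcepcion-dec24-2011}) that $P$ is a $1$-dimensional graded lattice ideal and then invokes Proposition~\ref{stcib-1-dim-lattice}. Your extra verification that $P=I(\mathcal{L}_1)$ with $\mathcal{L}_1=\ker(\psi)$ homogeneous of rank $s-1$ is precisely the content the paper delegates to that earlier proof.
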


\begin{proof} As seen in the proof of
Corollary~\ref{LaConcepcion-dec24-2011}, 
$P$ is a $1$-dimensional graded lattice ideal. 
Thus, the result follows 
at once from Proposition~\ref{stcib-1-dim-lattice}. 
\end{proof}

We come to another of our main results.

\begin{theorem}\label{adelantado1} Let $L\subset S$ be an arbitrary lattice ideal of
height $r$. If ${\rm char}(K)=0$ and  ${\rm rad}(L)={\rm rad}(g_1,\ldots,g_r)$ for 
some binomials $g_1,\ldots,g_r$, then 
$L=(g_1,\ldots,g_r)$. 
\end{theorem}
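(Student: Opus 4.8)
The plan is to reduce the statement to a primary decomposition argument and then exploit that in characteristic zero the relevant primary components of a binomial ideal are themselves binomial and, in fact, lattice ideals. First I would pass to the ideal $I=(g_1,\dots,g_r)$. Since $I\subset \mathrm{rad}(I)=\mathrm{rad}(L)=\mathrm{rad}(g_1,\dots,g_r)$ is generated by $r$ binomials and $r=\mathrm{ht}(L)=\mathrm{ht}(I)$, the ideal $I$ is a set theoretic complete intersection and hence unmixed: every associated prime of $S/I$ has height exactly $r$ (this uses that $I$ has a system of $r$ generators, so $S/I$ is Cohen--Macaulay of the expected codimension, or at least equidimensional). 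Consequently $\mathrm{Ass}(S/I)=\mathrm{Min}(S/I)=\mathrm{Min}(S/L)=\mathrm{Ass}(S/L)$, the last equality because $L$, being a lattice ideal, is unmixed as well (its height equals the rank of its lattice and $S/L$ is Cohen--Macaulay, cf.\ the Remark following Theorem~\ref{jun12-02}). So $I$ and $L$ have the same minimal primes, all of height $r$, and it suffices to show that the primary components match.

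The key structural input is that in characteristic zero the minimal primary components of a binomial ideal are again binomial, and the radical of a binomial ideal is binomial; more precisely, a binomial ideal which is primary (equivalently, one whose radical is prime) and has height equal to the rank of the lattice it generates localizes to a lattice ideal. I would invoke the cited fact \cite[Proposition~2.3]{stcib} that a primary binomial ideal over a field of characteristic zero is radical, together with the description of the minimal primes of a binomial ideal from \cite{EisStu}. Using Lemma~\ref{dec21-11}, each minimal primary component of $I$ over a minimal prime $\mathfrak{p}$ is determined by a lattice containing the lattice $\mathcal{L}=\langle\widehat{g}_1,\dots,\widehat{g}_r\rangle$; the same holds for $L$. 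Since $\mathrm{char}(K)=0$, these components are radical, hence equal to the corresponding minimal primes of $I$ and of $L$ respectively. As $I$ and $L$ share the same minimal primes and (being unmixed) have no embedded primes, their primary decompositions coincide:
$$
I=\bigcap_{\mathfrak{p}\in\mathrm{Min}(S/I)} \mathfrak{p}^{(\text{component})} = \bigcap_{\mathfrak{p}\in\mathrm{Min}(S/L)} \mathfrak{p}^{(\text{component})}=L.
$$

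The main obstacle I anticipate is the passage from ``$I$ is set theoretically a complete intersection'' to ``$I$ is unmixed with no embedded components,'' and the matching of primary components: one must be careful that a binomial set theoretic complete intersection of the right height is genuinely Cohen--Macaulay (this is where having exactly $r$ generators with $r=\mathrm{ht}(I)$ is used, via the unmixedness theorem), and that the characteristic-zero radicality result \cite[Proposition~2.3]{stcib} applies component by component after localizing. A clean alternative, which I would pursue if the primary-decomposition bookkeeping gets delicate, is to localize at each minimal prime $\mathfrak{p}$: there $I_\mathfrak{p}$ is generated by $r$ elements in a local ring of dimension $r$, hence $\mathfrak{p}_\mathfrak{p}$-primary and Cohen--Macaulay; by \cite[Proposition~2.3]{stcib} it is radical, so $I_\mathfrak{p}=L_\mathfrak{p}$ for every minimal prime, and unmixedness of both ideals upgrades this to the global equality $I=L$.
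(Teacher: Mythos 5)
Your reduction to ``$I=(g_1,\dots,g_r)$ is unmixed of height $r$ with the same minimal primes as $L$'' is fine and matches the first half of the paper's argument, but the step that is supposed to do the real work has a genuine gap. You assert that in characteristic zero the minimal primary components of the binomial ideal $I$ are again binomial, and then invoke \cite[Proposition~2.3]{stcib} (primary binomial ideals are radical in characteristic zero) componentwise to conclude $I=\bigcap_i\mathfrak{p}_i=L$. Binomiality of primary components is an Eisenbud--Sturmfels theorem that requires $K$ to be \emph{algebraically closed}; the theorem here is stated over an arbitrary field of characteristic zero, and over, say, $\mathbb{Q}$ the minimal components of a binomial (even lattice) ideal need not be binomial: $(t_1^3-1)\subset\mathbb{Q}[t_1]$ has the non-binomial minimal component $(t_1^2+t_1+1)$. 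So \cite[Proposition~2.3]{stcib} cannot be applied to the components as you describe, and you give no passage to $\overline{K}$ together with a faithfully flat descent argument ($I\,S_{\overline{K}}\cap S=I$) that would repair this. The ``clean alternative'' has the same problem in a different guise: \cite[Proposition~2.3]{stcib} concerns binomial ideals of the polynomial ring $S$, not ideals of the local ring $S_{\mathfrak{p}}$, and the $\mathfrak{p}$-primary component $I_{\mathfrak{p}}\cap S$ is exactly what you do not know to be binomial. In effect, both versions assume the hard content of the theorem, namely that $I$ is radical. (A smaller slip: you justify $\mathrm{Ass}(S/L)=\mathrm{Min}(S/L)$ by Cohen--Macaulayness via the Remark after Theorem~\ref{jun12-02}, which only covers homogeneous lattices of rank $s-1$; what you actually need is that lattice ideals in characteristic zero are radical, as the paper cites from \cite{gilmer}.)

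The paper's proof avoids binomial primary decomposition altogether. Its characteristic-zero input is \cite[Lemma~2.2]{stcib}: if $\mathrm{rad}(I)$ is generated by binomials $h_1,\dots,h_m$ (binomiality of the radical comes from \cite{gilmer}), then $\langle\widehat{g}_1,\dots,\widehat{g}_r\rangle=\langle\widehat{h}_1,\dots,\widehat{h}_m\rangle$. Combined with Lemma~\ref{dec21-11} and Lemma~\ref{jun13-02}, this shows that every binomial $h\in L=\mathrm{rad}(I)$ satisfies $t^{\alpha}h\in I$ for some monomial $t^{\alpha}$; the monomial is then removed using unmixedness of $I$ (your first paragraph) together with Theorem~\ref{jun12-02}, since a variable lying in a minimal prime of $L$ would be a zero divisor on $S/L$. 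If you want to keep your structural approach, you must work over $\overline{K}$, use the Eisenbud--Sturmfels decomposition and the characteristic-zero radicality result there, and then descend; as written, the key claims are false or inapplicable over a general field $K$ of characteristic zero.
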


\begin{proof} Consider the binomial ideal $I=(g_1,\ldots,g_r)$, where
$g_i=t^{a_i}-t^{b_i}$ for $i=1,\ldots,r$. Since ${\rm rad}(I)$ is again a binomial ideal
(see \cite[Theorem~9.4 and Corollary~9.12]{gilmer}), we may assume
that ${\rm rad}(I)$ is generated 
by a set of binomials $\{h_1,\ldots,h_m\}$. 
From \cite[Corollary~9.12, p.~106]{gilmer}, it is seen that any lattice
ideal over a field $K$ of characteristic zero is radical, i.e., 
${\rm rad}(L)=L$. Let 
\begin{equation}\label{apr28-12-0}
I=\mathfrak{q}_1\cap\cdots\cap \mathfrak{q}_p
\end{equation}
be a primary decomposition of $I$. Since $I$ is an ideal of height $r$
generated by $r$ elements and $S$ is Cohen-Macaulay, by the unmixedness theorem
\cite[Theorem~2.1.6]{BHer}, $I$ has no 
embedded primes. Hence, ${\rm rad}(\mathfrak{q}_i)=\mathfrak{p}_i$ 
is a minimal prime of both $I$ and $L$ for $i=1,\ldots,p$. Since ${\rm char}(K)=0$, by
\cite[Lemma~2.2]{stcib}, we 
have the equality 
\begin{equation}\label{apr28-12}
\langle\widehat{g}_1,\ldots,\widehat{g}_r\rangle=
\langle\widehat{h}_1,\ldots,\widehat{h}_m\rangle.
\end{equation}
The inclusion $I\subset L$ is clear. We now show the reverse
inclusion. 
Take a binomial $h$ in $L$. Since $L$ is generated by
$h_1,\ldots,h_m$, by Lemma~\ref{dec21-11}, the lattice that
defines $L$ is 
$\langle\widehat{h}_1,\ldots,\widehat{h}_m\rangle$. 
 Therefore, using Eq.~(\ref{apr28-12}) and
Lemma~\ref{jun13-02}, we get that there is
a monomial $t^\alpha$ so that $t^\alpha{h}\in I$. Thus, by
Eq.~(\ref{apr28-12-0}), $t^\alpha{h}\in\mathfrak{q}_i$ for all $i$. If $t^\alpha=1$,
then $h\in I$ and there is nothing to prove. Assume that $t^\alpha\neq
1$.  It suffices to prove that
$h$ belongs to $\mathfrak{q}_i$ for all $i$. If
$h\notin\mathfrak{q}_i$ for some $i$, then
$(t^\alpha)^\ell\in\mathfrak{q}_i$ and consequently $\mathfrak{p}_i$
must contain at least one variable $t_k$. Since $\mathfrak{p}_i$ is a minimal prime of
$L$, all its elements are zero divisors of $S/L$. In particular $t_k$
must be a zero divisor of $S/L$, a contradiction 
because $L$ is a lattice ideal and none of the variables of $S$ can
be a zero divisor of $S/L$ (see Theorem~\ref{jun12-02}). 
\end{proof}

As a consequence, we recover the following result.

\begin{corollary}{\cite{BMT}}\label{toric-version} Let $P\subset S$ be an
arbitrary toric ideal of height $r$. If ${\rm char}(K)=0$ and  $P={\rm rad}(g_1,\ldots,g_r)$ for 
some binomials $g_1,\ldots,g_r$, then $P=(g_1,\ldots,g_r)$. 
\end{corollary}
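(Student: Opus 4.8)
The final statement is Corollary~\ref{toric-version}, which asserts that a toric ideal $P$ of height $r$ over a field of characteristic zero that satisfies $P = \mathrm{rad}(g_1,\ldots,g_r)$ for binomials $g_1,\ldots,g_r$ must already equal $(g_1,\ldots,g_r)$.

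The plan is to derive this as an immediate special case of Theorem~\ref{adelantado1}. The only thing to check is that a toric ideal is in particular a lattice ideal, so that Theorem~\ref{adelantado1} applies. This is standard: by \cite[Corollary~7.1.4]{monalg} (already invoked in the proof of Corollary~\ref{LaConcepcion-dec24-2011}), a toric ideal $P$ is the lattice ideal $I(\mathcal{L})$ of the lattice $\mathcal{L} = \ker(\psi)$ associated to the relevant monomial parametrization; equivalently, $P$ is a prime binomial ideal and $S/P$ is a domain, so every variable $t_i$ is a nonzero divisor on $S/P$, whence $P$ is a lattice ideal by Theorem~\ref{jun12-02}. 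In either formulation, $P$ is a lattice ideal of height $r$.

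Having identified $P$ as a lattice ideal, I would observe that the hypotheses of Theorem~\ref{adelantado1} are met verbatim: $\mathrm{char}(K)=0$ and $\mathrm{rad}(P) = \mathrm{rad}(g_1,\ldots,g_r)$ with the $g_i$ binomials and $r = \mathrm{ht}(P)$ (here one uses additionally that a lattice ideal over a field of characteristic zero is radical, so $\mathrm{rad}(P) = P$, matching the hypothesis $P = \mathrm{rad}(g_1,\ldots,g_r)$ of the corollary with the hypothesis $\mathrm{rad}(L) = \mathrm{rad}(g_1,\ldots,g_r)$ of the theorem). Applying Theorem~\ref{adelantado1} then yields $P = (g_1,\ldots,g_r)$, which is the assertion.

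There is essentially no obstacle here; the content is entirely in Theorem~\ref{adelantado1}, and the corollary is a routine specialization. The only minor point worth stating explicitly in the write-up is the reduction of "toric ideal" to "lattice ideal," which is where a reference to \cite[Corollary~7.1.4]{monalg} or to Theorem~\ref{jun12-02} should be cited. Concretely, the proof can be written in two or three sentences.

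\begin{proof}
Every toric ideal is a lattice ideal. Indeed, as recalled in the proof of Corollary~\ref{LaConcepcion-dec24-2011}, by \cite[Corollary~7.1.4]{monalg} the toric ideal $P$ is the lattice ideal of a lattice $\mathcal{L}$; alternatively, $P$ is prime, so $S/P$ is a domain, every variable $t_i$ is a nonzero divisor of $S/P$, and $P$ is a lattice ideal by Theorem~\ref{jun12-02}. In particular $P$ is a lattice ideal of height $r$. Since $\mathrm{char}(K)=0$ and $P = \mathrm{rad}(g_1,\ldots,g_r)$ for the binomials $g_1,\ldots,g_r$, the hypotheses of Theorem~\ref{adelantado1} are satisfied (note $\mathrm{rad}(P)=P$ because a lattice ideal over a field of characteristic zero is radical). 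Applying Theorem~\ref{adelantado1} gives $P = (g_1,\ldots,g_r)$.
\end{proof}
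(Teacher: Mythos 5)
Your proposal is correct and is exactly how the paper intends the corollary to be obtained: the paper presents it as an immediate consequence of Theorem~\ref{adelantado1}, and your reduction (a toric ideal is a lattice ideal, by \cite[Corollary~7.1.4]{monalg} or by primeness together with Theorem~\ref{jun12-02}, and $P$ is radical so the hypotheses match) fills in the same routine verification. No gaps.
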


\section{Vanishing ideals over finite
fields}\label{vanishing-id-section}

We continue to use the notation and definitions used in
Sections~\ref{intro-ci-vanishing} and \ref{ci-lattice-section}. 
In this section we characterize the complete intersection
property of vanishing ideals over algebraic toric sets parameterized by
monomials.  Throughout this section we assume that the polynomial ring $S$ has the
standard grading induced by the vector $\mathbf{1}=(1,\ldots,1)$. 

\begin{lemma}\label{lemma-homogeneous-lattice} If $\mathbb{F}_q$ is a finite
field, then there is a unique homogeneous
lattice $\mathcal{L}$ with respect to the vector
$\omega=(1,\ldots,1)$ such that $I(X)=I(\mathcal{L})$. 
\end{lemma}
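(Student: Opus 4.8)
The plan is to combine the structural facts about $I(X)$ recalled in the introduction with Lemma~\ref{dec21-11} and with the characterization of homogeneous lattices stated right after the definition of a homogeneous lattice. The statement really splits into two independent assertions — existence of \emph{some} homogeneous lattice, and its uniqueness — and each reduces to a result already available in the paper.

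First I would settle that $I(X)$ is a lattice ideal. By \cite{geramita-cayley-bacharach,algcodes}, the vanishing ideal of a projective algebraic toric set parameterized by monomials is a radical, $1$-dimensional, Cohen--Macaulay lattice ideal; in particular $I(X)=I(\mathcal{M})$ for some lattice $\mathcal{M}\subset\mathbb{Z}^s$. (Alternatively one can derive this on the spot: $I(X)$ is a binomial ideal, and since it is radical it equals the intersection of the vanishing ideals of the points of $X$, each of which has all coordinates nonzero because the points are images of tuples in $(\mathbb{F}_q^*)^n$; hence no variable $t_i$ lies in any associated prime of $S/I(X)$, so each $t_i$ is a non-zero divisor of $S/I(X)$, and Theorem~\ref{jun12-02} applies.) Once $I(X)$ is known to be a lattice ideal, Lemma~\ref{dec21-11} gives at once that there is a \emph{unique} lattice $\mathcal{M}$ with $I(X)=I(\mathcal{M})$: if $g_1,\dots,g_r$ is any set of binomials generating $I(X)$, then $\mathcal{M}=\langle\widehat g_1,\dots,\widehat g_r\rangle$ is forced. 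This disposes of uniqueness (and the argument is insensitive to the weight vector).

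For the homogeneity of $\mathcal{M}$ with respect to $\omega=(1,\dots,1)$, I would use that $I(X)$ is, by definition, generated by the homogeneous polynomials of $S$ that vanish on $X$, so $I(X)$ is a graded ideal for the standard grading induced by $\mathbf{1}$. By the equivalence recorded just after the definition of a homogeneous lattice — namely that a lattice $\mathcal{L}$ is homogeneous with respect to $\omega$ if and only if its lattice ideal $I(\mathcal{L})$ is graded with respect to the grading of $S$ induced by $\omega$ — the lattice $\mathcal{M}$ with $I(X)=I(\mathcal{M})$ must be homogeneous with respect to $\mathbf{1}$. Taking $\mathcal{L}=\mathcal{M}$ completes the proof. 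There is no real obstacle here beyond making sure the notion of ``lattice ideal'' used in \cite{geramita-cayley-bacharach,algcodes} coincides with Definition~\ref{lattice-ideal-def}, which it does; everything else is an immediate invocation of Lemma~\ref{dec21-11} and the graded-versus-homogeneous dictionary.
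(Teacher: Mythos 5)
Your proof is correct and takes essentially the same route as the paper's: both rest on the external fact (from \cite{geramita-cayley-bacharach,algcodes}) that $I(X)$ is a lattice ideal and invoke Lemma~\ref{dec21-11} for uniqueness of the defining lattice. The only cosmetic difference is that you obtain homogeneity of the lattice from the gradedness of $I(X)$ together with the graded/homogeneous dictionary stated after the definition of a homogeneous lattice, while the paper reads it off directly from the homogeneous binomial generators supplied by \cite[Theorem~2.1]{algcodes}.
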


\begin{proof} By \cite[Theorem~2.1]{algcodes}, $I(X)$ is lattice ideal 
generated by homogeneous binomials. Let $\mathcal{L}$ be a homogeneous lattice
that defines $I(X)$. The uniqueness of $\mathcal{L}$ follows from
Lemma~\ref{dec21-11}. 
\end{proof}

\begin{corollary}\label{ci-char-lattice} If $\mathbb{F}_q$ is a finite
field, then $I(X)$ is a complete intersection if and only if there are 
homogeneous binomials $h_1,\ldots,h_{s-1}$ in $I(X)$ such that the following 
conditions hold\/{\rm :}
\begin{itemize}
\item[$(\mathrm{i})$]
$\mathcal{L}=\langle\widehat{h}_1,\ldots,\widehat{h}_{s-1}\rangle$,
where $\mathcal{L}$ is the lattice that defines $I(X)$. 
\item[$(\mathrm{ii})$] $V(h_1,\ldots,h_{s-1},t_i)=\{0\}$ for
$i=1,\ldots,s$.
\item[$(\mathrm{iii})$] $h_i=t^{a_i^+}-t^{a_i^-}$ for
$i=1,\ldots,s-1$.
\end{itemize}
\end{corollary}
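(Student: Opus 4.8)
The plan is to reduce Corollary~\ref{ci-char-lattice} directly to Theorem~\ref{ci-lattice}. By Lemma~\ref{lemma-homogeneous-lattice}, the vanishing ideal $I(X)$ is the lattice ideal $I(\mathcal{L})$ of a unique homogeneous lattice $\mathcal{L}$ with respect to $\omega=(1,\ldots,1)$, so $I(X)$ is a graded lattice ideal with respect to the standard grading. Hence to invoke Theorem~\ref{ci-lattice} I only need to verify the hypothesis $V(I(X),t_i)=\{0\}$ for all $i$; once that is in hand, Theorem~\ref{ci-lattice} gives the stated equivalence verbatim, since conditions (i), (ii), (iii) here are exactly conditions (i), (ii), (iii) there.

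First I would recall why $V(I(X),t_i)=\{0\}$ for every $i$. This is already asserted in the introduction: $I(X)$ is a $1$-dimensional Cohen--Macaulay lattice ideal (citing \cite{geramita-cayley-bacharach,algcodes}), so ${\rm ht}(I(X))=s-1$, and then Proposition~\ref{apr24-12}(b), applied to the lattice ideal $I(X)$ of height $s-1$, yields $V(I(X),t_i)=\{0\}$ for $i=1,\ldots,s$. Alternatively, and more concretely, one can exhibit explicit binomials in $I(X)$ that witness this: for any two indices $i\neq k$, the polynomial $t_i^{q-1}-t_k^{q-1}$ vanishes on $X$ because every coordinate of every point of $X$ is a $(q-1)$-st power of elements of $\mathbb{F}_q^*$ and $x^{q-1}=1$ for all $x\in\mathbb{F}_q^*$; since $(I(X),t_i)$ then contains $t_i$ and $t_k^{q-1}$ for all $k$, any point of $V(I(X),t_i)$ must be zero. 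Either route establishes the needed hypothesis.

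With $V(I(X),t_i)=\{0\}$ for all $i$ verified and $I(X)=I(\mathcal{L})$ a homogeneous lattice ideal, Theorem~\ref{ci-lattice} applies directly: $I(X)$ is a complete intersection if and only if there exist homogeneous binomials $h_1,\ldots,h_{s-1}\in I(X)$ satisfying (i) $\mathcal{L}=\langle\widehat{h}_1,\ldots,\widehat{h}_{s-1}\rangle$, (ii) $V(h_1,\ldots,h_{s-1},t_i)=\{0\}$ for all $i$, and (iii) $h_i=t^{a_i^+}-t^{a_i^-}$. This is precisely the assertion of the corollary, so the proof closes immediately.

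I do not expect any genuine obstacle here: the corollary is a specialization of Theorem~\ref{ci-lattice} to the particular lattice ideals $I(X)$, and the only thing to check is that $I(X)$ lies in the class of ideals to which that theorem applies. The one point requiring a little care is citing (or reproving) the facts that $I(X)$ is a lattice ideal of height $s-1$; both are standard (\cite{algcodes,geramita-cayley-bacharach}) and the height statement also follows from the Cohen--Macaulayness together with the fact that $\dim S/I(X)=1$. Everything else is a direct quotation of the structural results already proved above.
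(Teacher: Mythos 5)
Your proposal is correct and follows essentially the same route as the paper: identify $I(X)=I(\mathcal{L})$ via Lemma~\ref{lemma-homogeneous-lattice}, check $V(I(X),t_i)=\{0\}$ using the binomials $t_i^{q-1}-t_j^{q-1}\in I(X)$, and then quote Theorem~\ref{ci-lattice}. Your alternative verification of the hypothesis via Proposition~\ref{apr24-12}(b) is a harmless variant; the explicit-binomial argument you also give is exactly what the paper does.
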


\begin{proof} By Lemma~\ref{lemma-homogeneous-lattice}, there is a
unique homogeneous lattice $\mathcal{L}$ with respect to the vector
$\omega=\mathbf{1}$ such that $I(X)=I(\mathcal{L})$. The
binomial $t_i^{q-1}-t_j^{q-1}$ is in $I(X)$ for all $i,j$. Thus, 
$V(I(\mathcal{L}),t_i)=\{0\}$ for all $i$. Therefore the result
follows from Theorem~\ref{ci-lattice}.
\end{proof}

\begin{corollary}\label{stcib-i(x)} If $\mathbb{F}_q$ is a finite
field, then $I(X)$ is a binomial set theoretic complete
intersection.
\end{corollary}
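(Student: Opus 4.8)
The plan is to reduce \textbf{Corollary~\ref{stcib-i(x)}} to \textbf{Proposition~\ref{stcib-1-dim-lattice}}, which already handles all $1$-dimensional graded lattice ideals over fields of positive characteristic. The only things to check are that $I(X)$ fits the hypotheses of that proposition: that $I(X)$ is a graded lattice ideal, and that its dimension is $1$. Both facts are recorded in the excerpt: by \textbf{Lemma~\ref{lemma-homogeneous-lattice}} (and the cited \cite[Theorem~2.1]{algcodes}), $I(X)$ equals $I(\mathcal{L})$ for a lattice $\mathcal{L}$ homogeneous with respect to $\omega=\mathbf{1}$, so $I(X)$ is a graded lattice ideal in the standard grading. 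And it is stated in the introduction (citing \cite{geramita-cayley-bacharach,algcodes}) that $I(X)$ is a $1$-dimensional Cohen--Macaulay lattice ideal; alternatively, since the binomials $t_i^{q-1}-t_j^{q-1}$ lie in $I(X)$ for all $i,j$, one has $V(I(X),t_i)=\{0\}$ for all $i$, so \textbf{Proposition~\ref{apr24-12}(a)} gives ${\rm ht}(I(X))=s-1$, i.e.\ $\dim S/I(X)=1$.

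Concretely, I would write: by \textbf{Lemma~\ref{lemma-homogeneous-lattice}} there is a homogeneous lattice $\mathcal{L}$ with $I(X)=I(\mathcal{L})$, so $I(X)$ is a graded lattice ideal. The binomial $t_i^{q-1}-t_j^{q-1}$ vanishes on $X$ (because every coordinate of a point of $X$ is a product of $(q-1)$st powers of elements of $\mathbb{F}_q^*$, hence itself a nonzero $(q-1)$st power, in fact equal to $1$ after scaling in projective space — more simply, $x^{q-1}=1$ for $x\in\mathbb{F}_q^*$), so $t_i^{q-1}-t_j^{q-1}\in I(X)$ for all $i,j$; consequently $V(I(X),t_i)=\{0\}$ for all $i$, and \textbf{Proposition~\ref{apr24-12}(a)} yields ${\rm ht}(I(X))=s-1$, so $I(X)$ is $1$-dimensional. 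Since $\mathbb{F}_q$ has positive characteristic, \textbf{Proposition~\ref{stcib-1-dim-lattice}} applies and shows $I(X)$ is a binomial set theoretic complete intersection.

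There is essentially no obstacle here: the corollary is a direct specialization of \textbf{Proposition~\ref{stcib-1-dim-lattice}} once one observes that vanishing ideals of projective algebraic toric sets over finite fields are graded lattice ideals of dimension $1$, and all the ingredients (the lattice ideal structure from \cite{algcodes}, the dimension $1$ fact, and the positive characteristic of $\mathbb{F}_q$) are already in hand from earlier in the paper. The one point worth stating cleanly is why $\dim S/I(X)=1$; I would get this from the height computation via $V(I(X),t_i)=\{0\}$ rather than invoking the Cohen--Macaulay statement, since the former needs only results proved in the excerpt.
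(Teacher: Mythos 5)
Your proposal is correct and follows essentially the same route as the paper: reduce to Proposition~\ref{stcib-1-dim-lattice} after noting that $I(X)$ is a graded lattice ideal of dimension $1$. The only (harmless) difference is that the paper simply cites \cite{geramita-cayley-bacharach,algcodes} for the dimension-$1$ fact, whereas you rederive it via $t_i^{q-1}-t_j^{q-1}\in I(X)$ and Proposition~\ref{apr24-12}(a), which is exactly the argument the paper itself uses in the proof of Corollary~\ref{ci-char-lattice}.
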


\begin{proof} $I(X)$ is a $1$-dimensional graded lattice ideal 
\cite{geramita-cayley-bacharach,algcodes}. Thus, the result follows 
at once from Proposition~\ref{stcib-1-dim-lattice}.
\end{proof}

\bibliographystyle{plain}

\begin{thebibliography}{10}

\bibitem{ARZ} A. Alc\'antar, E. Reyes and L. Z\'arate, 
On affine toric curves in positive characteristic, 
Aportaciones Matem\'aticas, Serie 
Comunicaciones {\bf 27} (2000), 133--140. 

\bibitem{ballico-fontanari} E. Ballico and C. Fontanari, 
The Horace method for error-correcting codes, 
Appl. Algebra Engrg. Comm. Comput. {\bf 17} (2006), no. 2, 135--139. 

\bibitem{BMT} M. Barile, M. Morales, and A. Thoma, Set-theoretic
complete intersections on binomials, Proc. Amer. Math. Soc. {\bf 130}
(2002), 1893--1903.

\bibitem{stcib-algorithm} I. Bermejo, I. Garc\'\i a-Marco and J. J.
Salazar-Gonz\'alez, An algorithm for checking whether the toric ideal
of an affine monomial curve is a complete intersection, J. Symbolic
Comput. {\bf (42)} (2007), 971--991. 

\bibitem{stcib} I. Bermejo, P. Gimenez, E. Reyes, and 
R.~H.~Villarreal, Complete intersections in affine monomial curves, 
Bol. Soc. Mat. Mexicana (3) {\bf 11} (2005), 191--203. 

\bibitem{BHer}{W. Bruns and J. Herzog, 
{\em Cohen-Macaulay Rings\/},  Cambridge University 
Press, Cambridge, Revised Edition, 1997.}

\bibitem{delorme} C. Delorme, Sous-monoides d'intersection
compl${\rm \grave{e}}$te de $\mathbb{N}$, Ann. Sci. \'Ecole Norm. Sup. {\bf 9}
(1976), 145--154.

\bibitem{duursma-renteria-tapia} I. M. Duursma, C. Renter\'\i a and
H. Tapia-Recillas,  
Reed-Muller codes on complete intersections, Appl. Algebra Engrg.
Comm. Comput.  {\bf 11}  (2001),  no. 6, 455--462.

\bibitem{EisStu}{D. Eisenbud and B. Sturmfels, Binomial
ideals, Duke Math. J. {\bf 84} (1996), 1--45.}

\bibitem{Eli}{S. Eliahou, {\it Courbes monomiales et alg\`{e}bre de
{R}ees symbolique\/}, PhD thesis, 
Universit\'{e} de Gen\`{e}ve, 1983.}

\bibitem{Eli1}{S. Eliahou, Id\'eaux de d\'efinition des courbes 
monomiales, in {\it Complete Intersections\/} (S. Greco and 
R. Strano, Eds.), Lecture Notes in Mathematics {\bf 1092}, 
Springer-Verlag, Heidelberg, 1984, pp. 229--240.}

\bibitem{ElVi}{S. Eliahou and R. H. Villarreal, On systems of 
binomials in the ideal of a toric variety, 
Proc. Amer. Math. Soc. {\bf 130} (2002), 345--351.}

\bibitem{eto} K. Eto, When is a binomial ideal equal to a 
lattice ideal up to radical?, {Contemp. Math.} {\bf 331} (2003),
111--118.


\bibitem{FMS}{K. Fischer, W. Morris and J. Shapiro,
Affine semigroup rings that are complete intersections, 
Proc. Amer. Math. Soc. {\bf 125} (1997), 3137--3145.}

\bibitem{geramita-cayley-bacharach} A. V. Geramita, M. Kreuzer and L.
Robbiano, 
Cayley-Bacharach schemes and their canonical modules, Trans. Amer.
Math. Soc. {\bf 339}  (1993),  no. 1, 163--189. 

\bibitem{gilmer}{R. Gilmer,
{\it Commutative Semigroup Rings\/},
Chicago Lectures in Math., University Press, Chicago, 1984.
}

\bibitem{gold-little-schenck} L. Gold, J. Little and H. Schenck, 
Cayley-Bacharach and evaluation codes on complete intersections, 
J. Pure Appl. Algebra {\bf 196} (2005), no. 1, 91--99. 

\bibitem{hansen} J. Hansen, Linkage and codes on complete
intersections, Appl. Algebra Engrg. Comm. Comput. {\bf 14} (2003), no. 3, 175--185.

\bibitem{He3}{J. Herzog, Generators and relations of abelian
semigroups and semigroup rings, {Manuscripta Math.} {\bf 3} 
(1970), 175--193.}

\bibitem{katsabekis-morales-thoma} A. Katsabekis, M. Morales and A. Thoma, 
Binomial generation of the
radical of a lattice ideal, J. Algebra {\bf 324} (2010), no. 6,
1334--1346. 

\bibitem{cca}{ E. Miller and B. Sturmfels, {\it Combinatorial Commutative
Algebra\/}, Graduate Texts in  Mathematics {\bf 227}, Springer, 2004.}

\bibitem{moh}{T.~T. Moh, Set-theoretic complete intersections, 
Proc. Amer. Math. Soc. {\bf 94} (1985), 217--220.}

\bibitem{morales-thoma} M. Morales and A. Thoma, Complete intersection lattice
ideals, J. Algebra {\bf 284} (2005), 755--770.   

\bibitem{algcodes} C. Renter\'\i a, A. Simis and R. H. Villarreal,
Algebraic methods for parameterized codes 
and invariants of vanishing ideals over finite fields, Finite Fields
Appl. {\bf 17} (2011), no. 1, 81--104.  

\bibitem{ci-codes} E. Sarmiento, M. Vaz Pinto and R. H. Villarreal, 
The minimum distance of parameterized codes on projective tori, 
Appl. Algebra Engrg. Comm. Comput. {\bf 22} (2011), no. 4, 249--264.


\bibitem{thoma}{A. Thoma, On the set-theoretic complete intersection 
problem for monomial curves in ${\mathbb A}^n$ and 
${\mathbb P}^n$, J. Pure  Applied Algebra {\bf 104} (1995), 
333--344.}

\bibitem{monalg}{R. H. Villarreal, {\it Monomial Algebras\/}, 
Monographs and 
Textbooks in Pure and Applied Mathematics {\bf 238}, Marcel 
Dekker, New York, 2001.} 

\end{thebibliography}

\end{document}